\newtheorem{lemma}{Lemma}[section]
\newtheorem{theorem}{Theorem}[section]
\newtheorem{alg}{Algorithm}[section]
\theoremstyle{remark}
\newtheorem{remark}{Remark}[section]
\newtheorem{example}[theorem]{Example}
\newcommand{\re}{\mathbb{R}}
\newcommand{\N}{\mathbb{N}}
\newcommand{\st}{\mbox{subject to}}
\newcommand{\reff}[1]{(\ref{#1})}
\newcommand{\mc}[1]{\mathcal{#1}}
\newcommand{\bbm}{\begin{bmatrix}}
	\newcommand{\ebm}{\end{bmatrix}}
\newcommand{\bmx}{\begin{matrix}}
	\newcommand{\emx}{\end{matrix}}
\renewcommand{\theequation}{\thesection.\arabic{equation}}
\numberwithin{equation}{section}
\begin{document}

\title[WSM for nonlinear constrained optimization]{A New Working Set Method for Nonlinear 
Inequality Constrained Minimization}

\author[Suhan Zhong]{Suhan Zhong}
\author[Jianxin Zhou]{Jianxin Zhou}
\address{Suhan Zhong and Jianxin Zhou, Department of Mathematics, Texas A\&M University,
        College Station, TX 77843}
\date{}
\begin{abstract}
The main purpose of this project is to develop a new active set method (ASM) called a working set method (WSM) 
for solving a general nonlinear inequality constrained minimization problem in a Hilber space. Mathematical analysis 
is carried out to validate the method and to show its merits over other ASMs. Since the method is quite general and new, 
some results on its implementation and application are presented. Numerical examples on  some benchmark problems 
are carried out to test the method.
\end{abstract}
\maketitle

\section{Introduction}
\setcounter{equation}{0}
\renewcommand{\theequation}{\thesection.\arabic{equation}}

Let $H$ be a Hilbert space with inner product $\langle\cdot,\cdot\rangle$ and its norm $\|\cdot\|$.
Consider a general nonlinearly constrained minimization problem
\begin{equation}\label{EQ1.0}
\left\{\begin{array}{cl}
\underset{u\in H}{\mbox{Minimize}} & J(u)\\
\mbox{subject to} & g_i(u)\le 0,\, i \in \mc{I}_1,\\
& h_i(u) = 0,\, i\in \mc{I}_2,
\end{array}
\right.
\end{equation}
where $J, g_i, h_j: H\rightarrow \re$ are $\mc{C}^1$ functions,
the label sets $\mc{I}_1,\,\mc{I}_2$ are finite or empty. 

In this paper, we develop a new numerical method for solving the nonlinear optimization problem \reff{EQ1.0}.
It is well-known that under some constraint qualifications,
every optimizer $u$ of \reff{EQ1.0} satisfies the following Karush-Kuhn-Tucker (KKT) conditions: 
\begin{eqnarray}
 \mbox{\rm (Stationarity)} &
-J'(u)=\sum\limits _{i\in\mc{I}_1}\mu _{i}g'_{i}(u)+\sum\limits _{i\in\mc{I}_2}\lambda _{j}h'_{j}(u),\label{EQKKT1}\\
 \mbox{\rm (Primal feasibility)}  &  g_{i}(u)\leq 0,\, i\in \mc{I}_1,\quad h_{i}(u)=0,\, i\in\mc{I}_2,\label{EQKKT2}\\
  \mbox{\rm (Dual feasibility)} & \mu _{i}\geq 0,\, i\in \mc{I}_1,\label{EQKKT3}\\
 \mbox{\rm (Complementarity)} & \mu _{i}g_{i}(u) = 0,\, i\in \mc{I}_1.\label{EQKKT4}
\end{eqnarray}
where $\lambda_i, \mu_i$ are called respectively {\it Lagrange} and {\it KKT} multipliers.
Every $u\in H$ that satisfies \reff{EQKKT1}-\reff{EQKKT4} is called a KKT point of \reff{EQ1.0}.
It is very challenging to solve the KKT system \reff{EQKKT1}-\reff{EQKKT4} directly.
Indeed, only in few special cases where a closed-form solution can be derived analytically.
In general, many optimization algorithms can be interpreted as methods for numerically solving the KKT system 
(see e.g., [\ref{BOY}]). This motivates us to design an algorithm to find KKT points of \reff{EQ1.0}.
It is understood that using an objective function descent direction for searching a KKT point will mostlikely lead to a 
solution of (\ref{EQ1.0}).
 
As a class of numerical methods for solving nonlinear constrained minimization problems, the active set methods (ASMs) 
identify the active constraints in a set of inequality constraints and treat them as equality constraints, thereby convert an 
inequality-constrained problem into a simpler equality-constrained subproblem. Comparing to other methods,  ASMs have 
some distinctive features, e.g., the ability to warm-start with a good initial point.They are quite successful in (sequential) 
quadratic programming (QP) methods \cite{GMS1,GMS2,GR1,GR2,GR3,NW}. However, their weaknesses are also obvious.
Early ASMs \cite{ROS1,ROS2} assume that the optimal active set $I_A(u^*) := \{i\in\mc{I}_1: g_i(u^*) = 0\}$ is known 
for a local optimizer $u^*$ of \reff{EQ1.0}. Such a method lacks of flexibility to adaptively update the active index set when 
$I_A(u^*)$ is unknown and in particular, nonlinear inequality constraints are involved. That is, in the correction process in 
[\ref{ROS2}], the working index set used to form a correction basis is equal to the active index set. Then the correction 
process will not be superlinear and the active index set cannot be reduced. To resolve this problem, a commonly applied 
strategy is to use the most negative KKT multiplier component \cite{NW} to identify and remove a false active index from 
the active index set. This process will not be superlinear and still lacks of dynamics to update the active index set.
Also this process puts a natural lower bound on the number of iterations to reach an optimal solution and thus can be very slow.
As a result, ASMs are only used in small-median-scale QP problems in most early works in the literature. In order for an ASM 
to be more efficient in solving larger scale problems, most recent ASMs in the literature focus on predicting, 
estimating or approximating such an ``optimal'' active index set \cite{CHR,FGW}.

The main purpose of this project is to develop a new ASM called a working set method (WSM) for solving \reff{EQ1.0}.
Without loss of generality, we assume $\mc{I}_2 = \emptyset$, since equality constraints can always be viewed as 
fixed active inequality constraints. Let $\mc{I} = \mc{I}_1\cup\mc{I}_2$. Then \reff{EQ1.0} is simplified to be
\begin{equation}\label{EQ1.1}
\left\{\begin{array}{cl}
\underset{u\in H}{\mbox{Minimize}} & J(u)\\
\mbox{subject to} & g_i(u)\le 0\,, i \in \mc{I}.
\end{array}
\right.
\end{equation}
Let $\Omega$ be the feasible set of (\ref{EQ1.1}). For each $u\in \Omega$, we denote the {\it active index set} (AIS) by
\begin{equation}\label{eq:I_A(u)}
I_A(u) \,:=\, \{ i\in \mc{I} : g_i(u)=0\}.
\end{equation}
Throughout this paper, we assume the {\it linearly independent constraint qualification} (LICQ) holds: 
for each $u\in\Omega$, vectors  $\{g'_i(u): i\in I_A(u)\neq\emptyset\}$ are linearly independent.
The KKT system \reff{EQKKT1}-\reff{EQKKT4} can be simplified if the AIS is known for a KKT point.
Suppose $u$ is a KKT point of \reff{EQ1.1}. By the complementary slackness (\ref{EQKKT4}),  we have 
$\mu_i=0$ for $i\in \mc{I}\setminus I_A(u)$. Then stationarity (\ref{EQKKT1}) can be written as
\begin{equation}\label{EQKKT1-b}
-J'(u) = \sum _{i\in \mc{I}} \mu _{i}g'_{i}(u)=
\sum_{i\in I_A(u)} \mu_i g'_{i}(u),
\end{equation}
and the dual feasibility (\ref{EQKKT3}) becomes
\begin{equation}\label{EQ1.8}
\mu_i\ge 0,\quad \forall\, i\in I_A(u).
\end{equation}

In a nonlinear search for a KKT point, a computed solution candidate needs to be in the feasible region, 
thus various gradient projections and correction techniques are proposed in the literature. For example,
the conditional projection method \cite{FW} and its variations require the feasible set to be convex.
So they are only applicable for linear equality and convex inequality constrained optimization problems.

Let $C\subset H$ be a closed convex set and $\mathbb{P}_{C}(v)$ be the projection of $v\in H$ onto $C$.
We need some classical convex projection results.
\begin{lemma}\label{LM0.1}[\ref{BOY}] 
Let $C\subset H$ be a closed convex cone and $u\in H$. We have
\[\begin{array}{l}
\langle v-{\mathbb P}_C(u), u-{\mathbb P}_C(u)\rangle \le 0,\quad \forall v\in C;\\
\langle {\mathbb P}_C(u), u-{\mathbb P}_C(u)\rangle=0\quad \mbox{\rm and}\quad
\langle v, u-{\mathbb P}_C(u)\rangle\le 0, \forall v\in C;\\
\langle u, {\mathbb P}_C(u)\rangle=\|{\mathbb P}_C(u)\|^2\quad \mbox{\rm and}\quad 
\langle u, u-{\mathbb P}_C(u)\rangle=\| u-{\mathbb P}_C(u)\|^2.
\end{array}\]
\end{lemma}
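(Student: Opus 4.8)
\medskip\noindent\textbf{Proof proposal.} The plan is to derive all three lines of the statement from the single variational inequality that characterizes the metric projection onto a closed convex set, and then to exploit the additional structure that $C$ is a cone, i.e. that $C$ is closed under multiplication by nonnegative scalars. Write $p := \mathbb{P}_C(u)$ throughout.

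First I would establish the basic inequality $\langle v - p,\, u - p\rangle \le 0$ for all $v\in C$, which does not even use the cone property, only convexity and closedness. Since $p$ is the nearest point of $C$ to $u$, it minimizes $w\mapsto \|u-w\|^2$ over $w\in C$; for a fixed $v\in C$ and $t\in[0,1]$, convexity gives $w_t := (1-t)p + tv \in C$, so the scalar function $\varphi(t) := \|u-w_t\|^2 = \|u-p\|^2 - 2t\langle u-p,\, v-p\rangle + t^2\|v-p\|^2$ attains its minimum on $[0,1]$ at $t=0$, whence $\varphi'(0) = -2\langle u-p,\, v-p\rangle \ge 0$, which is exactly the claimed inequality.

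Next comes the second line, where the cone hypothesis enters. Because $C$ is a cone, both $2p\in C$ and $\tfrac{1}{2}p\in C$; substituting $v = 2p$ into the inequality just proved gives $\langle p,\, u-p\rangle \le 0$, while substituting $v = \tfrac12 p$ gives $-\tfrac12\langle p,\, u-p\rangle \le 0$, i.e. $\langle p,\, u-p\rangle \ge 0$. Combining the two yields $\langle p,\, u-p\rangle = 0$. Adding this equality to $\langle v-p,\, u-p\rangle \le 0$ then produces $\langle v,\, u-p\rangle \le 0$ for every $v\in C$, completing the second line.

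Finally the third line is pure algebra with inner products, using $\langle p,\, u-p\rangle = 0$: one writes $\langle u, p\rangle = \langle u-p, p\rangle + \langle p, p\rangle = 0 + \|p\|^2$ and $\langle u,\, u-p\rangle = \|u-p\|^2 + \langle p,\, u-p\rangle = \|u-p\|^2 + 0$. The only step that requires genuine care — the main obstacle, though a mild and classical one — is the very first: ensuring that $\mathbb{P}_C(u)$ is well defined as the \emph{unique} nearest point, which is the Hilbert-space projection theorem for closed convex sets and rests on completeness together with the parallelogram identity; once that is in hand, everything above is elementary, and the cone scaling trick is what upgrades the generic projection inequality to the sharper identities stated here.
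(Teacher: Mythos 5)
Your proposal is correct and complete: the variational inequality $\langle v-p,\,u-p\rangle\le 0$ obtained by minimizing $t\mapsto\|u-((1-t)p+tv)\|^2$, the cone-scaling substitutions $v=2p$ and $v=\tfrac12 p$ to force $\langle p,\,u-p\rangle=0$, and the ensuing algebra are exactly the classical argument. The paper itself offers no proof of this lemma---it simply cites the textbook reference---so your write-up supplies precisely the standard derivation that citation points to, including the correct observation that the only nontrivial prerequisite is the Hilbert-space projection theorem guaranteeing that $\mathbb{P}_C(u)$ exists and is unique.
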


For $u\in\Omega$, the AIS $I_A(u)$ is defined only by constraint functions and is independent of a search direction 
used to minimize the objective function $J$. While as numerical algorithms are concerned, a more active constrained 
boundary point of $\Omega$ can always be approximated by less or even none actively constrained points in $\Omega$.
In this case, a search direction with faster descending on objective function values can be expected with more freedoms. 
In other words, the smaller an AIS is, the more effective the projected gradient is to descend the objective function values,
but the more possible the feasibilities are to be violated. Thus it is a quite interesting problem to find a minimum AIS 
while maintaining all the feasibilities during computations.  In order to reduce active constraints and make a correction 
process superlinear, we introduce the notion of a {\em false active index} defined by a search direction. 
Such a false active constraint can actually be treated as an inactive constraint, therefore be excluded from a correction basis. 
With such a notion, the AIS $I_A$ can be effectively reduced to form a working index set $I_W$ while maintain all the feasibilities.

By a brief description of our original idea, we develop a new {\it working set method} (WSM), a search-and-correction process, 
by introducing two key terms for each feasible point $u\in \Omega$. One is the smallest {\it working index set} (WIS) 
\begin{equation}\label{EQ1.9}
I_W(u)=\{ i\in I_A(u) : \langle d_W(u), g'_i(u)\rangle =0\rangle \},
\end{equation} 
which is used to form a basis for a correction process to maintain the feasibilities. 
The other is the corresponding projected gradient
\begin{equation}\label{eq:d_W}
d_W(u) = -J'(u)-{\mathbb P}_{C_W(u)}(-J'(u))
\end{equation}
as a search direction,  where $C_W(u)$ is the convex polyhedral cone generated by the edges $g'_i(u), i\in I_W(u)$. 
Unfortunately, an infinite loop is formed in the definitions (\ref{EQ1.9}) and (\ref{eq:d_W}), i.e., $I_W(u)$ and $d_W(u)$ 
depend on each other. This infinite loop will be broken only after the equality (\ref{EQDA=DW}) is established.
Then we can see that an indix $i\in I_A(u)\setminus I_W(u)$ satisfies 
\begin{equation}\label{EQ1.11}
\langle d_W(u), g'_i(u)\rangle<0
\end{equation} 
and will be identified as {\em a false active indix} (FAI). Such a FAI is in opposite to the original intention of a correction 
process which is designed to pull an infeasible point back to the feasible set $\Omega$, not to push a less active constrained 
feasible point to a more active constrained position, see Figure~\ref{FIG0}. Also due to the strict inequality in (\ref{EQ1.11}), 
a correction process containing such a FAI in its basis will not be superlinear. It will cause difficulty to establish a stepsize rule 
and in algorithm convergence analysis as well. Thus it should be removed from a correction process.
\begin{figure}[htb]
\vspace{-0.0in}
\centerline{
\hspace{0.0in}
\epsfig{file=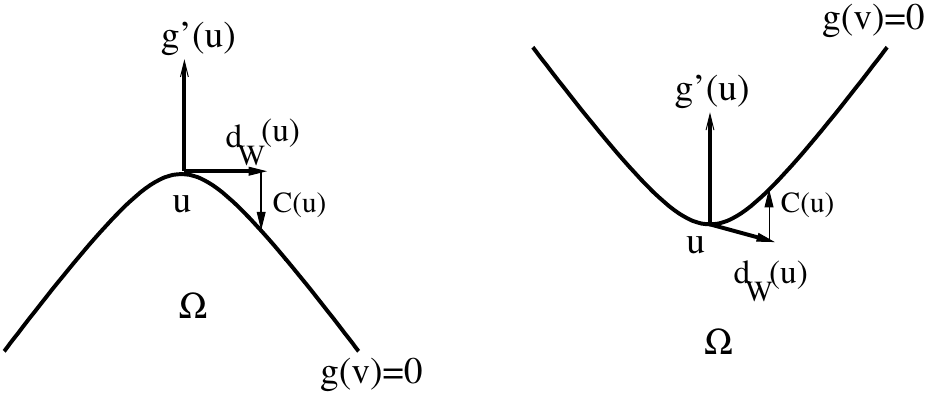,height=1.6in,width=4.0in}
}
\caption{$C(u)$ represents a correction process. On the left, with an active index, 
$C(u)$ pulls the infeasible point $u+td_W(u)$ back to a feasible point $u+td_W(u)+C(u)$ in $\Omega$;
on the right, with a false active index, $C(u)$ pushes a less active constrained point $u+td_W(u)$ to 
a more active constrained point in $\Omega$.}
\label{FIG0}
\end{figure}
After those FAIs being removed from a correction basis, we make a correction by solving proper functions $c_i(t)$ 
with $i\in I_W(u)$ for small $t>0$ such that
\[ u(t)=u+td_W(u)+\sum_{i\in I_W(u)}c_i(t)g'_i(u)\in  \Omega. \]
The implicit function theorem (IFT) guarantees the existence of such $c_i$'s with $c_i(t)=o(t)$, or superlinear. 
Then certain stepsize rule will be established to guide the algorithm to determine a stepsize $t$ and to update 
to a new point $u(t)$. 

This paper is organized as follows.
In Section~\ref{SEC2}, 
we introduce the working index set and its correctable steepest descent direction (CSDD),
and present some basic properties to validate our new WSM.
The flow chart of WSM is given in Section~\ref{SEC3}.   
In Section~\ref{SEC4}, we discuss some interesting implementations and applications of our WSM. 
In Section~\ref{SECEX}, we carry out numerical experiments of the new algorithm on some benchmark examples to nonlinear 
inequality constrained optimization and present their numerical results.
As a conclusion, in Section~\ref{SEC5}, analysis is carried out to compare our WSM with ASMs in the literature. 
A global convergence of the algorithm will be established in a subsequent paper [\ref{Zhou0}].

\section{The working index set and its correctable steepest descent direction}\label{SEC2}
\setcounter{equation}{0}
\renewcommand{\theequation}{\thesection.\arabic{equation}}

We focus on the nonlinear optimization probelm \reff{EQ1.1}. With the introduction of AIS $I_A(u)$, 
its KKT system can be equivalently written as
\begin{equation}\label{eq:KKT:equiv}
-J(u') = \sum\limits_{i\in I_A(u)} \mu_i g'_{i}(u),\quad 
\mu_i\ge 0\,\forall i\in I_A(u).
\end{equation}
For $u\in\Omega$, denote the polyhedral cone and the subspace determined by $I_A(u)$
\begin{eqnarray}
\label{EQ2.0}
C_A(u) \,=\, \mbox{cone}(\{g_i'(u): i\in I_A(u)\}) \,:=\, \Big\{ \sum_{i\in I_A(u)}  a_ig'_i(u) : a_i\ge 0 \Big\},\\
\label{eq:S_A(u)}
S_A(u) \,=\, \mbox{span\,}\{g'_i(u):  i\in I_A(u)\}\,:=\, \Big\{ \sum_{i\in I_A(u)}  b_ig'_i(u) : b_i\in\re \Big\}.
\end{eqnarray} 
For the special case that $I_A(u) = \emptyset$, we simply set $C_A(u)=S_A(u)=\{0\}$.
From \reff{eq:KKT:equiv}, we see that a point $u\in\Omega$ is KKT if and only if $-J'(u)\in C_A(u).$
Define the {\em correctable steepest descent direction} (CSDD) of $J$ to (\ref{EQ1.1}) at $u\in \Omega$ by
\begin{equation}\label{EQCSDD}
d_A(u) \,:=\, -J'(u) - {\mathbb P}_{C_A(u)}(-J'(u)).
\end{equation} 
It can be viewed as the residual of the KKT condition (\ref{eq:KKT:equiv}). Then the following equivalent relation holds.
\begin{lemma}[KKT-optimality]\label{LM1} 
A point $u\in \Omega$ is a KKT point of \reff{EQ1.1} if and only if $d_A(u) = 0$.
\end{lemma}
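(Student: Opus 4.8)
The plan is to prove the equivalence by directly unwinding the definition of $d_A(u)$ in \reff{EQCSDD} and using the characterization that $u\in\Omega$ is KKT iff $-J'(u)\in C_A(u)$, which was already derived from \reff{eq:KKT:equiv}. Concretely, $d_A(u)=-J'(u)-{\mathbb P}_{C_A(u)}(-J'(u))$ is the ``residual'' of projecting $-J'(u)$ onto the closed convex cone $C_A(u)$; it vanishes exactly when $-J'(u)$ already lies in $C_A(u)$. So the proof reduces to the elementary fact that for a closed convex cone $C$ and $w\in H$, ${\mathbb P}_C(w)=w$ if and only if $w\in C$.

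First I would establish that $C_A(u)$ is a closed convex cone so that Lemma~\ref{LM0.1} and the projection are well defined. Convexity and the cone property are immediate from the representation \reff{EQ2.0} (nonnegative combinations of a fixed finite set of vectors); closedness follows because $C_A(u)$ is a finitely generated cone, and under the standing LICQ assumption the generators $\{g'_i(u):i\in I_A(u)\}$ are linearly independent, so the cone is a genuine polyhedral cone (in the degenerate case $I_A(u)=\emptyset$ we have $C_A(u)=\{0\}$ by convention, which is trivially closed). For the forward direction, suppose $u$ is a KKT point. By \reff{eq:KKT:equiv} we have $-J'(u)=\sum_{i\in I_A(u)}\mu_i g'_i(u)$ with $\mu_i\ge 0$, hence $-J'(u)\in C_A(u)$. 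Setting $w=-J'(u)$, the projection of a point already in $C_A(u)$ onto $C_A(u)$ is the point itself, so ${\mathbb P}_{C_A(u)}(-J'(u))=-J'(u)$, and therefore $d_A(u)=-J'(u)-(-J'(u))=0$.

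For the reverse direction, suppose $d_A(u)=0$. Then $-J'(u)={\mathbb P}_{C_A(u)}(-J'(u))\in C_A(u)$, so by \reff{EQ2.0} there exist coefficients $\mu_i\ge 0$, $i\in I_A(u)$, with $-J'(u)=\sum_{i\in I_A(u)}\mu_i g'_i(u)$. Extending $\mu_i=0$ for $i\in\mc{I}\setminus I_A(u)$, this is precisely \reff{EQKKT1-b} together with \reff{EQ1.8}, and complementarity \reff{EQKKT4} holds since $\mu_i g_i(u)=0$ both when $i\in I_A(u)$ (then $g_i(u)=0$) and when $i\notin I_A(u)$ (then $\mu_i=0$); primal and dual feasibility hold as $u\in\Omega$ and $\mu_i\ge 0$. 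Hence $u$ is a KKT point of \reff{EQ1.1}.

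There is no serious obstacle here; the statement is essentially a restatement of the definition of $d_A(u)$ combined with the idempotency of convex projection on its own set. The only point requiring a word of care is the well-definedness of ${\mathbb P}_{C_A(u)}$, i.e.\ that $C_A(u)$ is a closed convex set so that the projection exists and is unique and so that Lemma~\ref{LM0.1} applies; this is handled by the finite-generation/LICQ remark above and the explicit convention in the empty-AIS case. If one wishes to avoid even invoking the representation \reff{EQ2.0} in the reverse direction, one can instead use the second identity in Lemma~\ref{LM0.1}, $\langle {\mathbb P}_C(w), w-{\mathbb P}_C(w)\rangle=0$ with $w-{\mathbb P}_C(w)=-d_A(u)$, but the direct argument above is cleaner.
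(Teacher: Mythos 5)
Your proof is correct and follows essentially the same route as the paper: the paper's one-line argument is exactly $d_A(u)=0 \Leftrightarrow -J'(u)={\mathbb P}_{C_A(u)}(-J'(u))\in C_A(u) \Leftrightarrow u$ is KKT, which is your unwinding of \reff{EQCSDD} together with the characterization from \reff{eq:KKT:equiv}. Your additional remarks on closedness of the finitely generated cone and the explicit verification of the KKT conditions are just a more detailed writing-out of the same argument.
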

\begin{proof}
 $d_A(u) = 0\Leftrightarrow-J'(u) = {\mathbb P}_{C_A(u)}(-J'(u))\in C_A(u)\Leftrightarrow u \mbox{ is a KKT point}$.
\end{proof}

Define a correctable descent direction commonly used in ASMs 
\begin{equation}\label{EQ1.4}
d_S(u) \, :=\, -J'(u) - {\mathbb P}_{S_A(u)}(-J'(u)).
\end{equation}
It is clear that $d_S(u) = 0$ if and only if $-J'(u)={\mathbb P}_{S_A(u)}(-J'(u))\in S_A(u)$. By Lemma~\ref{LM1}, 
if $u\in\Omega$ is a KKT point, then
\[
-J'(u)\,\in\, C_A(u)\,\subseteq\, S_A(u)\quad \Rightarrow\quad d_S(u) = 0.
\]
But $d_S(u)=0$ does not necessarily imply that $u$ is a KKT point. This motivates us to use $d_A(u)$ instead of $d_S(u)$ 
as a search direction to design a descent algorithm for  solving \reff{EQ1.1}: a feasible sequence $\{u^{(k)}\}$ will be 
generated by the algorithm and converge to a KKT point if $d_A(u^{(k)})\rightarrow 0$ as $k\rightarrow \infty$.
\begin{lemma}\label{LM2}
If $u\in\Omega$ is not a KKT point, 
then $d_A(u)\neq 0$ and all the following statements hold:
\begin{enumerate}
\item  $\langle v - {\mathbb P}_{C_A(u)}(-J'(u)) , \, d_A(u) \rangle\le 0,\;\forall v\in C_A(u)$. In particular,  
\[ \langle g'_i(u) - {\mathbb P}_{C_A(u)}(-J'(u)) , \, d_A(u) \rangle\le 0,\;  \forall i\in I_A(u);\hfill\]
\item $\langle d_A(u),\, {\mathbb P}_{C_A(u)}(-J'(u)) \rangle = 0;$
\item $\langle v, d_A(u)\rangle\le  0,\;\forall v\in C_A(u)$. In particular,
\[ \langle g'_i(u),\, d_A(u)\rangle\le 0,\quad  \forall i\in I_A(u);\hfill\]
\item $\langle -J'(u),\,  d_A(u)\rangle \,=\, \|d_A(u)\|^2 \,=\, \|-J'(u)\|^2 - \|{\mathbb P}_{C_A(u)}(-J'(u))\|^2 >0;$
\item $\langle {\mathbb P}_{C_A(u)}(-J'(u)),\,  -J'(u) \rangle \,=\, \|{\mathbb P}_{C_A(u)}(-J'(u))\|^2.$
\end{enumerate}
\end{lemma}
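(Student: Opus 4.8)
The plan is to obtain all five items as immediate specializations of the classical cone--projection identities in Lemma~\ref{LM0.1}, taken with the closed convex cone $C = C_A(u)$ and the point $-J'(u)$. Before invoking that lemma one must check that $C_A(u)$ really is a closed convex cone: by \reff{EQ2.0} it is the conical hull of the finite set $\{g_i'(u):i\in I_A(u)\}$, which is linearly independent under LICQ, so $C_A(u)$ is a polyhedral cone contained in the finite-dimensional subspace $S_A(u)$ and hence closed and convex. (When $I_A(u)=\emptyset$ one has $C_A(u)=\{0\}$, $\mathbb{P}_{C_A(u)}(-J'(u))=0$, $d_A(u)=-J'(u)$, and every assertion below is trivial.) The nondegeneracy $d_A(u)\neq 0$ is not something I need to work for: by Lemma~\ref{LM1}, $u$ is not a KKT point exactly when $d_A(u)\neq 0$, so it is part of the hypothesis.

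Next I would record the dictionary: with $C=C_A(u)$ and ``$u$'' of Lemma~\ref{LM0.1} taken to be $-J'(u)$, the definition \reff{EQCSDD} gives $\mathbb{P}_C(-J'(u))=\mathbb{P}_{C_A(u)}(-J'(u))$ and $-J'(u)-\mathbb{P}_C(-J'(u))=d_A(u)$. Under this dictionary, item (1) is the first line of Lemma~\ref{LM0.1}; item (2) is the first half of its second line; item (3) is the second half of its second line; item (5) is the first half of its third line; and the first equality in item (4), $\langle -J'(u),d_A(u)\rangle=\|d_A(u)\|^2$, is the second half of its third line. The ``in particular'' clauses in (1) and (3) are just the case $v=g_i'(u)$, which is legitimate because each $g_i'(u)$, $i\in I_A(u)$, belongs to $C_A(u)$.

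The only genuine computation is the middle equality in item (4). Expanding $\|d_A(u)\|^2=\|-J'(u)-\mathbb{P}_{C_A(u)}(-J'(u))\|^2$ and substituting the identity from item (5), $\langle -J'(u),\mathbb{P}_{C_A(u)}(-J'(u))\rangle=\|\mathbb{P}_{C_A(u)}(-J'(u))\|^2$, the cross term cancels one copy of $\|\mathbb{P}_{C_A(u)}(-J'(u))\|^2$ and leaves $\|d_A(u)\|^2=\|-J'(u)\|^2-\|\mathbb{P}_{C_A(u)}(-J'(u))\|^2$; chaining this with $\langle -J'(u),d_A(u)\rangle=\|d_A(u)\|^2$ gives the full string of equalities, and $\|d_A(u)\|^2>0$ because $d_A(u)\neq 0$. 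I do not expect any real obstacle in this lemma; the one point not to gloss over is the closedness of $C_A(u)$ --- which is exactly what makes Lemma~\ref{LM0.1} applicable --- and that is delivered for free by LICQ as noted above.
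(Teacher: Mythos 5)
Your proposal is correct and follows essentially the same route as the paper, which likewise proves Lemma~\ref{LM2} by a direct application of the convex-projection identities of Lemma~\ref{LM0.1} to the closed convex cone $C_A(u)$ with the point $-J'(u)$, noting $d_A(u)=-J'(u)-{\mathbb P}_{C_A(u)}(-J'(u))$. Your additional details (closedness of the finitely generated cone, the expansion giving the middle equality in item (4), and $d_A(u)\neq 0$ via Lemma~\ref{LM1}) simply make explicit what the paper leaves implicit.
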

\begin{proof}
We have $d_A(u) = -J'(u) - {\mathbb P}_{C_A(u)}(-J'(u))$ by \reff{EQCSDD}. Since $C_A(u)$ is a closed convex cone, 
all these conclusions are followed directly by Lemma~\ref{LM0.1}.
\end{proof}
Suppose $u\in\Omega$ is not a KKT point. An index $i\in I_A(u)$ is called a {\em false active index (FAI)} 
if there exists $t_0>0$ such that
\[
g_i(u+td_A(u)) <0\quad \mbox{for every}\quad 0<t\le t_0.
\] 
Such a FAI should be treated as an inactive index and be excluded from a correction process. To identify FAIs, 
we define the working index set (WIS) at $u\in \Omega$ as
\begin{equation}\label{EQWIS}
I_W(u) \,:=\, \{ i\in I_A(u): \langle g'_i(u), d_A(u)\rangle = 0 \}.
\end{equation} 
Then by Lemma~\ref{LM2}~(3), it holds that
\begin{equation}\label{EQ2.5}
\langle g'_i(u),\, d_A(u)\rangle < 0,\quad \forall i\in I_A(u)\setminus I_W(u),
\end{equation}
which implies for all small $t>0$, it holds
\[ g_i(u+td_A(u)) \,=\, g_i(u)+t\langle g_i'(u),d_A(u)\rangle +o(t) \,<\, 0. \]
In other words, every index in AIS but not in WIS is an FAI, which should be excluded from our correction process.
Thus we denote the {\em working polyhedral cone}
\begin{equation}\label{EQWPC}
C_W(u) \,:=\, \mbox{cone}(\{g_i'(u): i\in I_W(u)\}).
\end{equation}
It is a closed convex subset of $C_A(u)$. By Lemma~\ref{LM2}, we define a supporting hyperplane of $C_A(u)$ 
at the point ${\mathbb P}_{C_A(u)}(-J'(u))$,
\[
{\mathcal H}_A(u) \,:=\, \{ v\in H : \langle v, d_A(u)\rangle = 0\}.
\]
It separates $C_A(u)$ in its lower half space  
$\{ v\in H : \langle v, d_A(u)\rangle \le 0\}$ from $-J'(u)$ in the upper half space $\{ v\in H : \langle v, d_A(u)\rangle>0\}$. 
Note that $C_W(u)$ is also a subset of $\mc{H}_A(u)$. Indeed, $C_W(u)$ is a face of the polyhedral cone $C_A(u)$ 
whereat the projection ${\mathbb P}_{C_A(u)}(-J'(u))$ of $-J'(u)$ onto $C_A(u)$ locates.  It implies that the projections of 
$-J'(u)$ onto $C_A(u)$ and onto $C_W(u)$ are the same, i.e.,
\begin{equation}\label{EQDA=DW}
\begin{aligned}
d_W(u) &=  -J'(u)-{\mathbb P}_{C_W(u)}(-J'(u)) & \\
&= -J'(u)-{\mathbb P}_{C_A(u)}(-J'(u)) &= d_A(u).
\end{aligned}
\end{equation}
It is this equality that motivates us to use $d_A(u)$ instead of $d_W(u)$ to define $I_W(u)$ and $C_W(u)$, 
and thus breaks the infinite-loop in the definitions (\ref{EQ1.9}) and (\ref{eq:d_W}).
Now we use $d_W(u)$ and $I_W(u)$ to design a new algorithm where $d_A(u)=d_W(u)$ is used as a descent search dirction 
and the vectors $\{g'_i(u): i\in I_W(u)\}$ are used to form a basis for a superlinear correction process.
\begin{figure}[hbt]
\vspace{-0.2in}
\centerline{
\epsfig{file=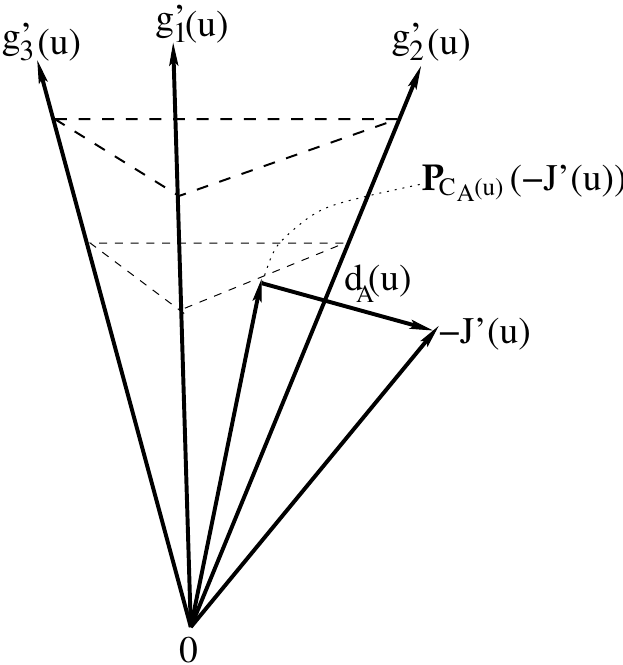,height=1.6in,width=2.0in}\hspace{0.2in}
\epsfig{file=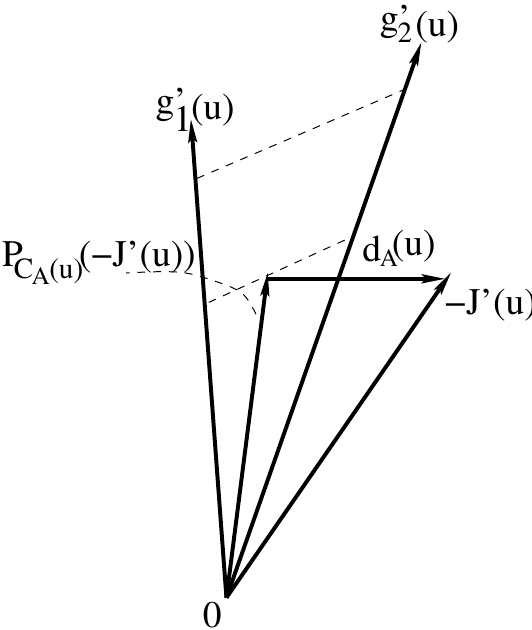,height=2.0in,width=2.0in}
}
\caption{Geometry of $d_A(u)$ for different AIS and WIS. 
On the left, $I_A(u) = \{1,2,3\}$ and $I_W(u) = \{1,2\}$; 
on the right, $I_A(u) = I_W(u) = \{1,2\}$.
For both cases, $d_A(u) = d_W(u)$ since ${\mathbb P}_{C_A(u)}(-J'(u))={\mathbb P}_{C_W(u)}(-J'(u))$.}
\label{FIG1}
\end{figure}

\begin{lemma}[Superlinear Correctability]\label{LM3}
If $u\in\Omega$ is not a KKT point,
then there exist locally ${\mathcal C}^1$ functions $c_i(t),\,i\in I_W(u)$ and a constant $t_0>0$ dependent on $u$ such that
for every $t\in (0, t_0)$, the correction
\begin{equation}\label{EQCRR1}
u(t) \,=\, u+td_W(u)+\sum_{i\in I_W(u)} c_i(t)g'_i(u)
\end{equation}
is feasible for \reff{EQ1.1} and satisfies
\begin{equation}\label{EQCRR2}
g_i(u(t))=0,\;\forall i\in I_W(u).
\end{equation}
In addition,  it holds that
\begin{equation}\label{EQ2.9}
c_{i}(t)=o(t),\;\forall i\in I_W(u)\quad\mbox{and}\quad 
\Big\|\sum_{i\in I_W(u)} c_i(t)g'_i(u)\Big\| = o(t).
\end{equation}
\end{lemma}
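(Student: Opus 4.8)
The plan is to produce the correction functions $c_i(t)$ by the implicit function theorem (IFT) applied to a \emph{finite}-dimensional system, and then to read off the superlinear rate and the feasibility from the derivative of that system at $t=0$. Fix an enumeration $I_W(u)=\{i_1,\dots,i_m\}$ (if $I_W(u)=\emptyset$ there is nothing to solve, $u(t)=u+td_W(u)$, and the feasibility argument below still applies). Define $F=(F_{i_1},\dots,F_{i_m}):\re\times\re^m\to\re^m$ by
\[
F_i(t,c) \,=\, g_i\Big( u + t\,d_W(u) + \sum_{j\in I_W(u)} c_j\, g_j'(u) \Big),\qquad i\in I_W(u).
\]
Since each $g_i$ is $\mathcal{C}^1$ on $H$ and $c\mapsto u+t\,d_W(u)+\sum_j c_j g_j'(u)$ is an affine map into $H$, $F$ is $\mathcal{C}^1$ between finite-dimensional spaces, so the classical IFT is available even though $H$ is infinite-dimensional. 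We have $F(0,0)=(g_i(u))_{i\in I_W(u)}=0$ because $I_W(u)\subseteq I_A(u)$, and the partial Jacobian in $c$ at $(0,0)$ is the Gram matrix $G=\big(\langle g_i'(u),g_j'(u)\rangle\big)_{i,j\in I_W(u)}$. By LICQ the vectors $\{g_i'(u):i\in I_A(u)\}$ are linearly independent, hence so is the subfamily indexed by $I_W(u)$, and therefore $G$ is positive definite, in particular invertible. The IFT then gives $t_0>0$ and unique $\mathcal{C}^1$ functions $c_i(t)$, $i\in I_W(u)$, with $c_i(0)=0$ and $F_i(t,c(t))=0$ for $|t|<t_0$, which is exactly \reff{EQCRR2}.

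Next I would establish the superlinear rate \reff{EQ2.9}. Differentiating the identity $F_i(t,c(t))\equiv 0$ at $t=0$ yields
\[
0 \,=\, \langle g_i'(u),\, d_W(u)\rangle + \sum_{j\in I_W(u)} \langle g_i'(u),\, g_j'(u)\rangle\, c_j'(0),\qquad i\in I_W(u).
\]
The first term vanishes by the very definition \reff{EQWIS} of $I_W(u)$ together with $d_W(u)=d_A(u)$ from \reff{EQDA=DW}; hence $G\,c'(0)=0$, and since $G$ is invertible, $c'(0)=0$. As each $c_i$ is $\mathcal{C}^1$ with $c_i(0)=c_i'(0)=0$, Taylor expansion gives $c_i(t)=o(t)$, and then $\big\|\sum_{i\in I_W(u)} c_i(t) g_i'(u)\big\| \le \sum_{i} |c_i(t)|\,\|g_i'(u)\| = o(t)$ because the sum is finite.

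Finally I would verify that $u(t)\in\Omega$ on a possibly smaller interval $(0,t_0)$, arguing index by index. For $i\in I_W(u)$, $g_i(u(t))=0\le 0$ by \reff{EQCRR2}. For $i\in I_A(u)\setminus I_W(u)$ we have $g_i(u)=0$ and, using $u'(0)=d_W(u)+\sum_j c_j'(0)g_j'(u)=d_W(u)$,
\[
\frac{d}{dt}\Big|_{t=0} g_i(u(t)) \,=\, \langle g_i'(u),\, d_W(u)\rangle \,<\, 0
\]
by \reff{EQ2.5}, so $g_i(u(t))<0$ for all small $t>0$. For $i\in\mc{I}\setminus I_A(u)$ we have $g_i(u)<0$, so by continuity of $t\mapsto g_i(u(t))$ there is $t_i>0$ with $g_i(u(t))<0$ on $[0,t_i)$. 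Shrinking $t_0$ to the minimum of these finitely many thresholds gives $u(t)\in\Omega$ for every $t\in(0,t_0)$.

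The step I expect to be the crux is the identity $c'(0)=0$: this is where the design choice of defining $I_W(u)$ through the orthogonality $\langle g_i'(u),d_A(u)\rangle=0$ — rather than through $I_A(u)$ — is essential and is precisely what makes the correction superlinear. The invertibility of $G$ via LICQ and the fact that the IFT is applied to a finite-dimensional reduction (so the infinite dimensionality of $H$ is harmless) are the other points to treat with a little care, though both are routine.
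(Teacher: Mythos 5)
Your proposal is correct and takes essentially the same route as the paper: apply the implicit function theorem to the finite-dimensional system $g_i(u(t))=0$, $i\in I_W(u)$, use the orthogonality defining $I_W(u)$ together with $d_W(u)=d_A(u)$ and the LICQ-nonsingular Gram matrix to conclude $c'(0)=0$ and hence $c_i(t)=o(t)$, then secure feasibility of the indices in $I_A(u)\setminus I_W(u)$ via the strict inequality \reff{EQ2.5} and of the inactive indices by continuity. The only differences are presentational (you invoke the IFT before differentiating the identity $F(t,c(t))\equiv 0$, and you note explicitly the empty $I_W(u)$ case), which if anything tidies up the paper's ordering.
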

\begin{proof}
Without loss of generality, suppose $I_W(u) = \{1, 2,\dots, k\}$ and denote by $g = (g_1, \ldots, g_k)^T$ the tuple of working constraints. For $u(t)$ as in \reff{EQCRR1}, let $c = (c_1, \ldots, c_k)$ be a vector function of $t$ obtained from solving the system
\begin{equation}\label{EQCS}
G (t,c(t)) \,:=\, \big[ g_{1}(u(t)),\, \ldots, \, g_{k}(u(t)) \big]^T={\bf 0}_k=(0,...,0)^T\in {\mathbb R}^k.
\end{equation}
Clearly, $u(0)=u, c(0)={\bf 0}_k$ and $G(0,c(0))=g(u)={\bf 0}_k.$ Differentiate $G(t,c(t))$ with respect to $t$ at $t=0$. 
We obtain
\begin{equation}\label{eq:GtGc}
G_t(0,{\bf 0}_k)+G_c(0,{\bf 0}_k)(c'(0))={\bf 0}_k,
\end{equation}
where $G_t(0)$ and $c'(0)$ are column vectors given by
\[
G_t(0,\mathbf{0}_k) \,=\, [ \langle d_W(u),g'_{i}(u)\rangle ]_{(1\le i\le k)},\quad
c'(0) \,=\, [c_i'(0)]_{(1\le i\le k)},
\]
and $G_c(0,\mathbf{0}_k)$ is the gram matrix 
\[
G_c(0,\mathbf{0}_k) \,=\, {\rm Gram}(g'(u)) \,=\, 
\left[ \langle g'_{i}(u), g'_{j}(u)\rangle\right]_{(1\le i,j \le k)}.
\]
$G_t(0,\mathbf{0}_k) = \mathbf{0}_k$ by \reff{EQWIS} and \reff{EQDA=DW}, 
and $G_c(0,\mathbf{0}_k)$ is nonsingular under LICQC.
Then by the implicit function theorem (IFT, e.g., [\ref{KCC}, \ref{IFT}]), 
there is a unique vector function $c$ and constant $t_1(u)>0$ such that 
\[
G(t, c(t)) ={\bf 0}_k\; \forall 0<t<t_0(u)\quad \mbox{and}\quad
\mbox{$c(t)$ is locally ${\mathcal C}^1$ near $t=0$}.
\] 
By \reff{eq:GtGc}, $c'(0)= -G_c(0,{\bf 0}_k)^{-1}G_t(0,{\bf 0}_k)$ and $G_t(0,{\bf 0}_k) ={\bf 0}_k$. It follows
\begin{equation}\label{eq:c=o(t)}
c'(0) \,=\, c(0) \,=\, \mathbf{0}_k\quad \Rightarrow\quad
c_i(t)\,=\,o(t)\quad \forall i\in I_W(u).
\end{equation} 
In addition, since $|I_W(u)|\le |\mc{I}|<\infty$ and $\sum_{i\in I_W(u)} \|g'_i(u)\|<M(u)$ for some constant $M(u)>0$, 
we have
\begin{equation}\label{eq:sumc=o(t)}
\Big\|\sum_{i\in I_W(u)} c_i(t)g'_i(u) \Big\| \le\sum_{i\in I_W(u)} \|g'_i(u)\|\Big(\max_{i\in I_W(u)}|c_i(t)|\Big) = o(t).
\end{equation}
Note $g_i(u)<0\,\forall i\in \mc{I}\setminus I_A(u)$ and 
$g_i(u)=0, \langle g'_i(u), d_W(u)\rangle <0\, \forall i\in I_A(u)\setminus I_W(u)$. 
Then by properties \reff{eq:c=o(t)}--\reff{eq:sumc=o(t)},  there is $t_2(u)>0$ such that $g_i(u(t))< 0$ for all 
$i\in \mc{I}\setminus I_W(u)$ and $0<t<t_2(u)$. Denote $t_0=\min\{t_1(u), t_2(u)\}$.  When $0<t<t_0$, $u(t)$ is feasible.
Therefore, all the conclusions are proved.
\end{proof}
The superlinear property in the correction process is critical for us to maintain the feasibility condition 
$g_i(u(t))\le 0$ for $i\in{\mathcal I}\setminus I_W(u)$. 

\begin{lemma}[Stepsize Rule for CSDD]\label{LM2.5} 
Suppose $u\in \Omega$ is not a KKT point. Then there exists a scalar $t_0>0$ dependent on $u$ such that
\begin{equation}\label{EQSR}
J(u(t)) - J(u) <-\frac{t}{2}\|d_W(u)\|^2,\quad \forall\, 0<t<t_0,
\end{equation}
where $u(t)$ is defined as in \reff{EQCRR1} satisfying \reff{EQCRR2}-\reff{EQ2.9}.
\end{lemma}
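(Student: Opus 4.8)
The plan is to run a first-order Taylor expansion of $J$ along the corrected path $u(t)$ and to show that the linear term equals exactly $-t\|d_W(u)\|^2$ up to an $o(t)$ error, the error coming from two independent sources: the $\mathcal{C}^1$ remainder of $J$ at $u$, and the superlinear correction $c_i(t)=o(t)$ supplied by Lemma~\ref{LM3}. Once this is established, positivity of $\|d_W(u)\|^2$ (which holds since $u$ is not KKT) lets the quadratic-in-$t$ threshold $-\tfrac{t}{2}\|d_W(u)\|^2$ dominate the $o(t)$ term for $t$ small.

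First I would invoke Lemma~\ref{LM3}: for $0<t<\bar t_0$, where $\bar t_0=t_0(u)$ is the radius produced there, the point $u(t)$ in \reff{EQCRR1} is well defined and feasible, and $u(t)-u = t\,d_W(u)+r(t)$ with $r(t):=\sum_{i\in I_W(u)} c_i(t)g'_i(u)$ obeying $\|r(t)\|=o(t)$ by \reff{EQ2.9}. In particular $\|u(t)-u\|\le t\|d_W(u)\|+o(t)=O(t)$. Next, since $J\in\mathcal{C}^1$, Fréchet differentiability gives $J(u(t))-J(u)=\langle J'(u),\,u(t)-u\rangle+o(\|u(t)-u\|)=t\langle J'(u),d_W(u)\rangle+\langle J'(u),r(t)\rangle+o(t)$. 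For the leading term, Lemma~\ref{LM2}~(4) together with $d_A(u)=d_W(u)$ from \reff{EQDA=DW} yields $\langle J'(u),d_W(u)\rangle=-\|d_W(u)\|^2$. For the correction term, $\langle J'(u),r(t)\rangle=\sum_{i\in I_W(u)} c_i(t)\langle J'(u),g'_i(u)\rangle$ is a finite sum of fixed scalars times $o(t)$ quantities, hence itself $o(t)$. Combining, $J(u(t))-J(u)=-t\|d_W(u)\|^2+o(t)$.

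Finally, because $u$ is not a KKT point, Lemma~\ref{LM1} (or Lemma~\ref{LM2}~(4)) gives $\|d_W(u)\|^2=\|d_A(u)\|^2>0$. Therefore there is a constant $0<t_0\le\bar t_0$ such that the $o(t)$ term has absolute value strictly below $\tfrac12 t\|d_W(u)\|^2$ for all $0<t<t_0$, whence $J(u(t))-J(u)<-t\|d_W(u)\|^2+\tfrac12 t\|d_W(u)\|^2=-\tfrac{t}{2}\|d_W(u)\|^2$ on $(0,t_0)$, which is \reff{EQSR}.

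The only delicate point is the bookkeeping of the two $o(t)$ contributions — the Taylor remainder $o(\|u(t)-u\|)$ of $J$, which must be converted to $o(t)$ using $\|u(t)-u\|=O(t)$, and the superlinearity $c_i(t)=o(t)$ of the correction from Lemma~\ref{LM3} — and the requirement that the final $t_0$ be taken no larger than the radius $\bar t_0$ on which $u(t)$ is defined and feasible; apart from that, the argument is a routine first-order estimate.
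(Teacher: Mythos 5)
Your proposal is correct and follows essentially the same route as the paper: invoke Lemma~\ref{LM3} for the feasible corrected path with $c_i(t)=o(t)$, expand $J$ to first order, identify the leading term as $-t\|d_W(u)\|^2$, and absorb the remainders into $o(t)$ which is dominated by $\tfrac{t}{2}\|d_W(u)\|^2$ since $u$ is not KKT. The only cosmetic difference is that you obtain $\langle J'(u),d_W(u)\rangle=-\|d_W(u)\|^2$ directly from Lemma~\ref{LM2}~(4) rather than via the decomposition $J'(u)=-d_W(u)-{\mathbb P}_{C_A(u)}(-J'(u))$ and Lemma~\ref{LM2}~(2) as the paper does, which is an equally valid shortcut.
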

\begin{proof}
By Lemma~\ref{LM3}, there exists $\tau_1>0$ such that $u(t)$ satisfies \reff{EQCRR2}-\reff{EQ2.9} 
for every $0<t<\tau_1$ with unique functions $c_i(t),\;i\in I_W(u)$. Then \reff{EQ2.9} implies that
\begin{equation}\label{EQ2.20}
u(t) - u \,=\, td_W(u)+\sum_{i\in I_W(u)}c_i(t)g'_i(u) \,=\, O(t).
\end{equation}
Since $J$ is a $\mc{C}^1$ function, 
there exists $\tau_2>0$ such that for every $0<t<\tau_2$, 
\[\begin{aligned}
J(u(t))-J(u) &= \langle J'(u), u(t)-u\rangle+o(\|u(t)-u\|)\\
&=  t\langle J'(u), d_W(u)\rangle + \sum\limits_{i\in I_W(u)}c_i(t)\langle J'(u), g'_i(u)\rangle+o(t)\\
&= t\langle J'(u), d_W(u)\rangle + o(t).
\end{aligned}\]
In the above, the last equality is obtained by \reff{EQ2.9} and Cauchy-Schwartz inequality.
Recall from \reff{EQDA=DW} and Lemma~\ref{LM2}~(2), we have
\[
J'(u) = -d_W(u) - \mathbb{P}_{C_A(u)}(-J'(u)),\quad  d_A(u)\perp \mathbb{P}_{C_A(u)}(-J'(u)).
\]
Since $d_W(u) = d_A(u)$, we have
\[
\langle J'(u), d_W(u)\rangle \,=\, \langle -d_A(u), d_W(u)\rangle
\,=\,-\|d_W(u)\|^2.
\]
Then for every $0<t<\tau_2$, we obtain
\[
J(u(t)) - J(u) = -t\|d_W(u)\|^2+o(t)<-\frac{t}{2}\|d_W(u)\|^2.
\]
Finally the conclusion holds for $t_0 = \min\{\tau_1, \tau_2\}$.
\end{proof}

\begin{remark}
In the stepsize rule, we may assume $\|d_W(u)\|$ is bounded for all $u\in \Omega$. 
Otherwise, $d_W(u)$ can be replaced by its normalization, i.e., $d_W(u)/C(u)$ where $C(u)=\max\{1, \|d_W(u)\|\}$. 
Then (\ref{EQSR}) becomes 
\[ J(u(t))-J(u) \,<\, -\frac{t}{2C(u)}\|d_W(u)\|^2. \]
\end{remark}

\section{The correctable steepest descent method}\label{SEC3}
\setcounter{equation}{0}

We present a new WSM algorithm, also called the {correctable steepest descent method} (CSDM). 
\begin{alg}[CSDM algorithm]\label{alg:WSM} 
For the constrained optimization problem \reff{EQ1.1},
let $\tau>0$, $\varepsilon>0$, $u^{(0)}\in \Omega$, set $k=0$, and do the following:
\begin{description}
\item[Step 1]
Determine AIS $I_A =\{ i\in \mc{I}: g_i(u^{(k)}) = 0\}$ and evaluate $g'_i(u^{(k)})$ for all $i\in I_A$.

\item[Step 2]
Compute CSDD 
$\;
d^{(k)} \,=\, -J'(u^{(k)}) - {\mathbb P}_{C_A}(-J'(u^{(k)})),
$
where $\mathbb{P}_{C_A}(\cdot)$ is the projection onto $C_A = cone\{g_i'(u^{(k)}): i\in I_A\}$.\\
If $\|d^{(k)}\|<\varepsilon$, then output $u^*=u^{(k)}$ and stop,
else go to the next step.

\item[Step 3]
Determine WIS $I_W = \{ i\in I_A : \langle d^{(k)},  g'_i(u^{(k)}) \rangle = 0 \}$. 

\item[Step 4]
Find the maximum $\bar{t}\in (0, \tau)$ such that $\bar{u} := u^{(k)}+\bar{t}d^{(k)}$ satisfies
\[
J(\bar{u}) - J(u^{(k)}) < -\frac{\bar{t}}{2}\|d^{(k)}\|^2\quad
\mbox{and}\quad
g_i(\bar{u})<0\, \forall i\in\mc{I}\setminus I_A.
\]

\item[Step 5]
Construct the correction
\begin{equation}\label{EQSSR2}
 u(t) := u^{(k)} + t d^{(k)}+\sum_{i\in I_W} c_i(t)g'_i(u^{(k)}),
 \end{equation} 
where each $c_i(t)$ is solved from the system $g_i(u(t))=0\,(i\in I_W)$.

\item[Step 6]
Use $\bar{t}$ as an initial guess to find the maximum $t_k\in (0, \tau)$ such that
\begin{equation}\label{EQSSR}
J(u(t_k)) - J(u^{(k)}) < -\frac{t_k}{2}\|d^{(k)}\|^2\quad \mbox{and}\quad
u(t_k)\in \Omega.
\end{equation}
Update $u^{(k+1)} := u(t_k)$, set $k :=k+1$ and go back to Step 2.
\end{description}
\end{alg}

\begin{remark}
In Step~2, for many cases in infinite dimensional spaces, to obtain $J'(u^{(k)})$, 
one needs to solve a differential equation by an approximation method, 
such as a FEM, a FDM, a BEM, or a spectral method, etc. 

The numerical implementation of finding the projection $\mathbb{P}_{C_A}(\cdot)$
is a typical application of our new method to a quadratic objective function subject to
 a quite simple positive cone constraint, see as in Subsections~\ref{SEC4.1} and \ref{SEC4.2}.

Step 3 is a key element. 
It allows the algorithm search to leave some AIS in the last iteration.
In numerical computations, the active set is determined up to a precision, i.e., 
$i\in I_W$ if $|\langle d^{(k)}, g_i'(u^{(k)})\rangle |<10^{-6}$, due to round-off errors.

The pre-correction strategy in Step 4 is based on Lemmas~\ref{LM3}-\ref{LM2.5} where the last higher-order term $o(t)$ in (\ref{EQCRR1}) has been omitted to save computational cost in the correction process carried out in Step 5.

In Step 5, since $\{g'_i(u^{(k)}): i\in I_W\}$ is known and linearly independent,
a Newton method is preferred to solve $c_i(t)$ from $g_i(u(t)) = 0$ for $i\in I_W$. 
Also in the implementation, both the heuristic stepsize $\bar{t}$ and the actual stepsize $t_k$ are not necessary 
to be the maximum. They can be the first value of the form $2^{-j}\tau$ to satisfy the pre-correction and
 the correction requirements respectively.
\end{remark}

\section{Some Special Cases}\label{SEC4}
\setcounter{equation}{0}
\renewcommand{\theequation}{\thesection.\arabic{equation}}
Since our problem setting and the algorithm framework are quite general, in this section, 
we discuss some special cases and their efficient implementations in Algorithm~\ref{alg:WSM}.

\subsection{Linear constraints}\label{SEC4.0}
 
Linear constrained optimziation problems are relatively simple but still an interesting type of constrained optimization problems.
For convenience, we denote 
$\mc{I} := \{1, \ldots, m\}$ 
and the constraint tuple 
$g = (g_1, \ldots, g_m)$ in $u$.
Assume \reff{EQ1.1} has all linear constraints, i.e., 
\[
g_i(u) = \langle a_i, u\rangle + b_i,\quad i\in\mc{I},
\] 
where $a_i\in H$ and $b_i\in {\mathbb R}$ are given. It is clear that $g_i'(u) = a_i$ is independent of $u$.
Let $u(t) = u+td_W(u)$. For each $i\in \mc{I}$,
\[
g_i(u(t)) \,=\, \langle a_i, u+td_W(u)\rangle + b_i 
\,=\, g_i(u) + t\langle a_i, d_W(u)\rangle.
\]
Let $I_W(u)$ be as in \reff{EQWIS}. Interestingly, for each small $t>0$, $u(t)$ is feasible for \reff{EQ1.1} 
with the unchanged working index set $I_W(u)$. To see this, for each $i\in I_W(u)$, we have $g_i(u(t))=0$ because
$g_i(u) =0$ and $\langle g'_i(u), d_W(u) \rangle = \langle a_i, d_W(u)\rangle = 0$.
On the other hand, for each $i\in {\mathcal I}\setminus I_W(u)$, we have $g_i(u(t))<0$ for small $t>0$ 
because either $i\in {\mathcal I}\setminus I_A(u)$, i.e., $g_i(u)<0$ or $i\in I_A(u)\setminus I_W(u)$, i.e.,  
$g_i(u) = 0, \langle g'_i(u), d_W(u)\rangle=\langle a_i, d_W(u)\rangle <0$.
It implies that in \reff{EQCRR1}, each $c_i(t)$ is identically zero,
thus \reff{EQCRR1} is reduced to $u(t)=u+td_W(u)$. In other words, no correction is necessary.

\subsection{Optimize over nonnegative orthant}\label{SEC4.1}

Let $H = \re^n$ and $J\in {\mathcal C}^1({\mathbb R}^n, {\mathbb R})$.
Consider the optimization problem
\begin{equation}\label{eq:nnopt}
\underset{x = (x_1, \ldots, x_n)\in\re^n}{\mbox{Minimize}}J(x)\quad
\mbox{subject to }\; x \ge 0.
\end{equation}
Its KKT-condition is: 
$x^* = (x_1^*, \ldots, x_n^*) \ge 0$, and
\begin{equation}\label{EQ6.2}
[ J'(x^*)]_i=  \frac{\partial J(x^*)}{\partial x_i} = 0\;\;\mbox{if $x_i^*>0$},\;\;
 [ J'(x^*)]_i= \frac{\partial J(x^*)}{\partial x_i} \ge 0\;\; \mbox{ if $x_i^*=0$}.
\end{equation}  
Denote $[r]^+=\max\{r,0\}$ for a real number $r$. Then in a fixed point form, \reff{EQ6.2} is equivalent to 
\begin{equation}\label{EQ4.3}
x^* \,=\, [x^*-\alpha J'(x^*)]^+,
\end{equation}
where $\alpha$ is any fixed positive scalar, and the vector in the right hand side is formed by 
$[x^*-\alpha J'(x^*)]_j^+$ for the $j$th entry. This gives a simple iterative scheme:
\begin{equation}\label{EQ6.3}
x^{(k+1)} \,=\,  [x^{(k)} - \alpha J'( x^{(k)} )]^+,\quad \forall\, k\in \N.
\end{equation}

Apply our WSM to the optimization problem \reff{eq:nnopt}. Let $e_1,...,e_n$ be the canonical basis for 
${\mathbb R}^n$ and ${\mathcal I}=\{1,...,n\}$. Then in this case, we have $g_i(x)=-x_i$, $g_i'(x) = -e_i$ 
for all $i\in {\mathcal I}$, thus all $g_i'(x)$ are orthogonal, and 
\[
I_A(x) = \{ i\in {\mathcal I}: x_i=0\},\quad C_A(x) = cone(\{-e_i: i\in I_A(x)\}).
\]
With the orthogonality of $g'_i(x)$, the projection of $-J'(x)$ onto $C_A(x)$ has the form
\[
\sum_{i\in I_A(x)} [\langle J'(x), e_i\rangle]^ + (-e_i)=-\sum_{i\in I_A(x)} [J'(x)_i]^+e_i.
\]
For convenience, for fixed $u\in\Omega$ and every vecor $v=(v_1,...,v_n)^T\in H$, we denote
\begin{equation}\label{eq:vec^p}
\mathbb{P}^-(v) = \sum_{i\in I_A(u)} [v_i]^ + e_i,\quad
v^\mathbb{P} \,:=\, v - \mathbb{P}^-(v).
\end{equation}
Then by \reff{EQCSDD}, we get
\begin{equation}\label{eq:d_W=J^p}
d_W(x) = [-J'(x)]^{\mathbb{P}},
\end{equation}
whose $i$th entry is computed by
$$
[d_W(x)]_i =\left\{ 
\begin{array}{ll}
0, & \mbox{if $x_i = 0$ and $[-J'(x)]_i<0$}, \\

[-J'(x)]_i, & \mbox{otherwise}.
\end{array}
\right.
$$
The simple positive cone constraint is a special case of linear constraints, 
so no correction is necessary in each algorithm iteration by previous discussions. 
In this case, our algorithm iteration scheme becomes
\begin{equation}\label{eq:lcx^k}
x^{(k+1)} = x^{(k)} + \alpha_k [-J'(x^{(k)})]^{\mathbb P}= 
x^{(k)} + \alpha_k [-J'(x^{(k)})-{\mathbb P}^-(-J'(x^{(k)}))],
\end{equation}
where $\alpha_k>0$ is selected to satisfy our stepsize rule and the feasibility $x^{(k+1)}\ge 0$. 

\subsection{Optimize with quadratic objective function}\label{SEC4.2}

As a classic nonlinear optimization problem, we consider to minimize a quadratic objective functional subject to a conic constraint
\begin{equation}\label{EQLQ}
\underset{{x\in {\mathbb R}^n}}{\mbox{Minimize}}\,\,\Big\|\sum_{i=1}^n x_iu_i-u_0\Big\|^2\quad
\mbox{subject to }\; x\ge 0,
\end{equation}
where $u_0, u_1,...,u_n\in H$ are given and $u_1,...,u_n$ are linearly independent.
This problem can be solved directly by Algorithm~\ref{alg:WSM}
with the iteration scheme \reff{eq:lcx^k}. It can also be solved in a more computationally convenient way. 
Note that this problem can be viewed as to compute the projection of $u_0$ onto the convex cone
$C=\mbox{cone}\{u_1,...,u_n\}$. 

\begin{lemma}\label{LM0.2}
Let $S\subset H$ be a closed subspace and $C\subset S$ be a closed convex set. For any $u\in H$, 
we have ${\mathbb P}_C({\mathbb P}_S(u))={\mathbb P}_C(u)$.
\end{lemma}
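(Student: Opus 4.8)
The plan is to use the variational (obtuse‑angle) characterization of the projection onto a closed convex set together with the orthogonality of $u-\mathbb{P}_S(u)$ to $S$. Recall that for a nonempty closed convex set $C$ in a Hilbert space, $p=\mathbb{P}_C(w)$ is the unique element of $C$ satisfying
\[
\langle w-p,\; c-p\rangle \le 0\qquad \forall\, c\in C .
\]
So it suffices to verify that $p:=\mathbb{P}_C(\mathbb{P}_S(u))$ satisfies this inequality with $w:=u$; uniqueness then forces $p=\mathbb{P}_C(u)$, which is exactly the claimed identity.

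First I would split, for an arbitrary $c\in C$,
\[
\langle u-p,\; c-p\rangle
\;=\; \langle u-\mathbb{P}_S(u),\; c-p\rangle \;+\; \langle \mathbb{P}_S(u)-p,\; c-p\rangle .
\]
For the first term, note that both $c$ and $p$ lie in $C\subseteq S$, so $c-p\in S$; since $\mathbb{P}_S$ is the orthogonal projection onto the closed subspace $S$, we have $u-\mathbb{P}_S(u)\perp S$, hence this term vanishes. For the second term, $p=\mathbb{P}_C(\mathbb{P}_S(u))$, so by the characterization above applied with $w=\mathbb{P}_S(u)$ we get $\langle \mathbb{P}_S(u)-p,\,c-p\rangle\le 0$. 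Adding the two contributions yields $\langle u-p,\,c-p\rangle\le 0$ for every $c\in C$, as required.

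Alternatively, one can argue by the Pythagorean identity: for any $c\in C$, writing $u-c=(u-\mathbb{P}_S(u))+(\mathbb{P}_S(u)-c)$ with the two summands orthogonal (the first is in $S^\perp$, the second in $S$ because $c\in C\subseteq S$), we obtain $\|u-c\|^2=\|u-\mathbb{P}_S(u)\|^2+\|\mathbb{P}_S(u)-c\|^2$. Since the first summand does not depend on $c$, minimizing $\|u-c\|$ over $c\in C$ is equivalent to minimizing $\|\mathbb{P}_S(u)-c\|$ over $c\in C$, and the two problems therefore have the same (unique) minimizer, i.e. $\mathbb{P}_C(u)=\mathbb{P}_C(\mathbb{P}_S(u))$.

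There is essentially no hard step here: the only thing to be careful about is invoking $C\subseteq S$ at the right moment so that differences $c-p$ (equivalently $\mathbb{P}_S(u)-c$) stay inside $S$, which is what makes the cross term drop out. I would also implicitly use that $H$ is a Hilbert space and $C$ is nonempty, closed and convex, so that $\mathbb{P}_C$ is well defined on all of $H$; Lemma~\ref{LM0.1} is not even needed, only the elementary first‑order optimality condition for convex projections.
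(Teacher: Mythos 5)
Your proposal is correct. Your primary argument, though, takes a slightly different route from the paper: you verify directly that $p=\mathbb{P}_C(\mathbb{P}_S(u))$ satisfies the variational (obtuse-angle) characterization $\langle u-p,\,c-p\rangle\le 0$ for all $c\in C$, splitting the inner product and killing the cross term via $u-\mathbb{P}_S(u)\perp S$ and $c-p\in S$ (this is where $C\subseteq S$ and the subspace structure of $S$ enter), then invoking uniqueness of the projection. The paper instead works straight from the $\arg\min$ definition: writing $u=u_S+u_{S^\perp}$, it uses the Pythagorean identity $\|c-u\|^2=\|c-u_S\|^2+\|u_{S^\perp}\|^2$ for $c\in C$ to conclude the two minimization problems share the same minimizer --- which is precisely your ``alternative'' second argument. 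The two routes are of comparable length; the paper's is marginally more elementary (no appeal to the first-order characterization, only the definition of the metric projection), while yours makes the geometric inequality explicit and is the form that meshes with Lemma~\ref{LM0.1}-style statements. You are also right that only the general projection-onto-a-closed-convex-set characterization is needed, not Lemma~\ref{LM0.1} itself (which is stated for cones), and that nonemptiness of $C$ should be tacitly assumed for $\mathbb{P}_C$ to be defined.
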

\begin{proof}
Write $u=u_S+u_{S^\bot}$ with $u_S\in S, u_{S^\bot}\in S^\bot:=\{ v\in H:\langle v,w\rangle=0, \forall w\in S\}$. 
Then we have
\[
{\mathbb P}_C(u) = \underset{c\in C}{\arg \min} \|c-u\|
= \underset{c\in C}{\arg\min} \|c-u_S\| = {\mathbb P}_C({\mathbb P}_S(u)).
\]
In the above, the second equality holds because $\|c-u\|^2=\|c-u_S\|^2+\|u_{S^\bot}\|^2$ for every $c\in C$.
\end{proof}

Then by Lemma~\ref{LM0.2}, this problem can be solved in  two steps:
First, we solve the projection problem $\min_{x\in {\mathbb R}^n}\|\sum_{i=1}^n x_i u_i-u_0\|^2$. 
The optimal solution $\bar{x}$ has a closed form expression $\bar{x}=G^{-1}[\langle u_0, u_j\rangle]_{(1\le j\le n)}$ 
where $G=[\langle u_i, u_j\rangle]_{( 1\le i,j\le n)}$ is the positive definite Gram matrix and
$\bar{u}=\sum_{i=1}^n\bar{x}_iu_i$ is the projection of $u_0$ onto the subspace $\mbox{span}\{ u_1,...,u_n\}$.

Then the optimization problem (\ref{EQLQ}) is equivalent to 
\begin{equation}\label{EQLQ2}
\left\{\begin{array}{cl}
\underset{x\in {\mathbb R}^n}{\mbox{Minimize}}& J(x) = \frac12\big\|\sum\limits_{i=1}^n (x_i-\bar{x}_i)u_i\big\|^2\\
\mbox{subject to } & x\ge 0.
\end{array}
\right.
\end{equation}
One can reformulate $J(x) =\frac12 (x-\bar{x})^TG(x-\bar{x})$, thus we have $J'(x)=G(x-\bar{x})$.
To solve \reff{EQLQ2} with Algorithm~\ref{alg:WSM}, the iterative formula \reff{eq:lcx^k} simply becomes
\begin{equation}\label{EQQI}
x^{(k+1)} \,=\, x^{(k)} + \alpha_k [-Gx^{(k)}+G\bar{x}]^{\mathbb P},
\end{equation}
where $\alpha_k>0$ is determined by the feasiblility $x^{(k+1)}\ge 0$ and the stepsize rule
\[
J(x^{(k+1)}) - J(x^{(k)}) \le 
-\frac{\alpha_k}{4}\|[-J'(x^{(k)})]^{\mathbb P}\|^2.
\] 
In addition, denote $Q=G^{1/2}$, then the stepsize rule can be explicitly expressed as:
\begin{equation}\label{EQQI-2}
\begin{array}{c}
\alpha_k (x^{(k)}-\bar{x})^TG[-G(x^{(k)}-\bar{x})]^{\mathbb P}
+ (\alpha_k)^2\big\|Q[-G(x^{(k)}-\bar{x})]^{\mathbb P} \big\|^2\\
\le -\frac{\alpha_k}{4}\big\|[G(x^{(k)}-\bar{x})]^{\mathbb P}\big\|^2.
\end{array}
\end{equation}
When the equality holds, we obtain
\begin{equation}\label{eq:alpha_k}
\alpha_k=\frac{-
2(x^{(k)}-\bar{x})^TG[-G(x^{(k)} - \bar{x})]^{\mathbb P}-\frac{1}{4}
\|[-G(x^{(k)}-\bar{x})]^{\mathbb P}\|^2
}{\|Q[-G(x^{(k)}-\bar{x})]^{\mathbb P} \|^2}.
\end{equation}
It is an upper bound for setting $\alpha_k$ in actual computations.
By the projection property, we have
\[
-(x^{(k)}-\bar{x})^TG[-G(x^{(k)} - \bar{x})]^{\mathbb P}
\,=\, \|[-G(x^{(k)} - \bar{x})]^{\mathbb P}\|^2.
\]
Reformulate \reff{eq:alpha_k} with the above equation.
We obtain
\begin{equation}\label{EQQI-3}
\alpha_k = \frac{7}{4}\cdot\frac{
\|[-G(x^{(k)}-\bar{x})]^{\mathbb P}\|^2}{\|Q[-G(x^{(k)}-\bar{x})]^{\mathbb P} \|^2}> 0,
\end{equation}
if $d_W(x^{(k)})=[-G(x^{(k)}-\bar{x})]^{\mathbb P}\neq 0$, i.e., if $x^{(k)}$ is not a KKT point.

The fixed point expression (\ref{EQ4.3}) for a KKT point $x^*$ now becomes 
\begin{equation}\label{EQ4.8}
x^* \,=\, [x^*-\alpha J'(x^*)]^+ \,=\, [x^*-\alpha(Gx^*-G\bar{x})]^+,
\end{equation}
where $\alpha>0$ is a fixed scalar. At this fixed point, $x^*\ge 0$ and
\[
d_W(x^*) \,=\, [-J'(x^*)]^{\mathbb P} \,=\, [-Gx^*+G\bar{x}]^{\mathbb P}=0.
\]
So the fixed point iterative formula coincides with \reff{EQQI} as in Algorithm~\ref{alg:WSM}.
\begin{remark}
For given $u^{(k)}\in\Omega$,
let $u_0 = -J'(u^{(k)})$, $u_i = g_i'(u^{(k)})$ for each $i\in I_A = I_A(u^{(k)})$ and denote the Gram matrix 
$G=[\langle u_i, u_j\rangle]_{(i,j\in I_A)}$, then
the above procedure can be used to compute the projection $\mathbb{P}_{C_A}(\cdot)$   
in Algorithm~\ref{alg:WSM} at the $k$th iteration.
\end{remark}

\subsection{Optimize over other constraints}

We discuss other commonly appeared constraints in nonlinear optimization.

\noindent
$\bullet$ ({\it Two-sided bounds.})  \cite{HZ}
Two sided constraints
\[
a_i\le g_i(u)\le b_i,\quad  i=1,...,m,
\]
where $a_i<b_i$ are real numbers,
can be treated as one sided constraints by setting $\tilde{g}_i(u)\le c_i, i=1,...,2m$ where 
$\tilde{g}_i=g_i, c_i=b_i, \tilde{g}_{m+i}=-g_i, c_{m+i}=-a_i, i=1,...,m$. The assumption that 
$\{g'_i(u): i\in I_A(u)\}$ are linearly independent will be enough since the case $g_i(u)=a_i$ and 
$g_i(u)=b_i$ will never happen for all $u\in\Omega$.
 
\noindent

\noindent
$\bullet$ ({\it Mixed (in)equality constraints.}) 
When there are also equality constraints, i.e.,
$h_i(u) = 0, i\in \mc{I}_2$ as in (\ref{EQ1.0}), denote the feasible set
$\Omega = \{ u\in H : g_i(u) \le 0, i\in {\mathcal I}_1, h_j(u) = 0, j\in{\mathcal I}_2\}$. 
For each $u\in\Omega$, assume $g'_i(u), h'_j(u)$ for all $i\in I_A(u), j\in{\mathcal I}_2$ are linearly independent. 
Denote $S(u)=\mbox{span}\{ h'_j(u):j\in {\mathcal I}_2\}$,
\begin{eqnarray}
I^+_A(u) = I_A(u)\cup {\mathcal I}_2, && I^+_W(u) = I_W(u)\cup {\mathcal I}_2,\label{EQ4.15}\\
C^+_A(u) = C_A(u)\oplus S(u), && C^+_W(u) = C_W(u)\oplus S(u),\label{EQ4.16}\\
d^+_A(u) = -J'(u)-{\mathbb P}_{C^+_A(u)}(-J'(u)), && d^+_W(u)=-J'(u)-{\mathbb P}_{C^+_W(u)}(-J'(u)).\nonumber
\end{eqnarray}
Since $C^+_A(u)$ and $C^+_W(u)$ are closed and convex, Lemma~\ref{LM2} can be applied by replacing 
$C_A(u), d_A(u)$ with $C^+_A(u), d^+_A(u)$ respectively to show that
$$\langle g'_i(u), d^+_A(u)\rangle< 0,\;\forall i\in I_A(u)\setminus I_W(u),\quad
\langle g'_i(u), d^+_A(u)\rangle = 0,\;\forall i\in I_W(u),$$
$$\langle h'_j(u), d^+_A(u)\rangle = 0,\;\forall j\in{\mathcal I}_2,\;
\langle {\mathbb P}_{C^+_A(u)}(-J'(u)), d^+_A(u)\rangle=0.
$$
For each non-KKT point $u\in \Omega$, define a supporting hyperplane of $C^+_A(u)$ at the point 
${\mathbb P}_{C^+_A(u)}(-J'(u))$,
\[
{\mathcal H}^+_A(u) \,:=\, \{ v\in H : \langle v, d^+_A(u)\rangle = 0\}.
\]
It separates $C^+_A(u)$ in its lower half space $\{ v\in H : \langle v, d^+_A(u)\rangle \le 0\}$ from 
$-J'(u)$ in the upper half space $\{ v\in H : \langle v, d^+_A(u)\rangle>0\}$. 
Note that $C^+_W(u)$ is also a subset of $\mc{H}^+_A(u)$.
Indeed, $C^+_W(u)$ is a face of the convex cone $C^+_A(u)$ 
whereat the projection ${\mathbb P}_{C^+_A(u)}(-J'(u))$ of $-J'(u)$ onto $C^+_A(u)$ locates. 
It implies that the projections of $-J'(u)$ onto $C^+_A(u)$ and onto $C^+_W(u)$ are the same, i.e.,
\begin{equation}
\begin{array}{rclcl}
d^+_A(u) &=&  -J'(u)-{\mathbb P}_{C^+_A(u)}(-J'(u)) & &\\
&=& -J'(u)-{\mathbb P}_{C^+_W(u)}(-J'(u)) &= &d^+_W(u).
\end{array}
\end{equation}
Then it is clear that for each $u\in\Omega$,
\begin{eqnarray*}
d^+_A(u)=0&\Leftrightarrow& -J'(u)={\mathbb P}_{C^+_A(u)}(-J'(u))\in\, C^+_A(u)=
C_A(u)\oplus S(u)\\
&\Leftrightarrow& -J'(u)=\sum_{i\in I_A(u)} \mu_i g'_i(u)+\sum_{j\in\mc{I}_2}\lambda_j h'_j(u),
\end{eqnarray*}
where $\mu_i\ge 0$. 
This is equivalent to the KKT conditions (\ref{EQKKT1})-(\ref{EQKKT4}). Thus $u$ is a KKT point of (\ref{EQ1.0}). 
By ${\mathbb P}_{C^+_A(u)}(-J'(u))={\mathbb P}_{C^+_W(u)}(-J'(u))\in {\mathcal H}^+_A(u)$, we should also have
$$-J'(u)\,=\, \sum_{i\in I_A(u)} \mu_i g'_i(u)+\sum_{j\in \mc{I}_2}\lambda_j h'_j(u)
\,=\, \sum_{i\in I_W(u)} \mu_i g'_i(u)+\sum_{j\in \mc{I}_2}\lambda_j h'_j(u),$$
where $\mu_i=0, i\in I_A(u)\setminus I_W(u)$ since $\langle d^+_A(u), g'_i(u)\rangle<0,\;
\forall i\in I_A(u)\setminus I_W(u)$ or $\{g'_i(u): i\in I_A(u)\setminus I_W(u)\}\cap {\mathcal H}^+_A(u)=\emptyset$.

Next we modify the correction form (\ref{EQCRR1}) accordingly as
$$u(t) \,=\, u + td^+_A(u) + \sum_{i\in I_W(u)} c_i(t)g'_i(u) + \sum_{j\in\mc{I}_2}c_j(t) h'_j(u).$$
With those modifications,  Lemmas~\ref{LM1}, \ref{LM3} and \ref{LM2.5} can all be verified without any difficulty. 
Finally the Algorithm~\ref{alg:WSM} can be applied to (\ref{EQ1.0}).

\noindent
$\bullet$ ({\it Pure equality constraints.}) 
When there are only equality constraint, i.e., $\mc{I}_1=\emptyset$, 
we set $I_A(u)\equiv I_W(u)=\emptyset$ in (\ref{EQ4.15}) and $C_A(u)=C_W(u)=\{0\}$ in (\ref{EQ4.16}). 
Then it follows $C^+_A(u)=C^+_W(u)=S(u)$ and $d^+_A(u)=d^+_W(u)=-J'(u)-{\mathbb P}_{S(u)}(-J'(u))$.
From the very lase case,  our Algorithm~\ref{alg:WSM}
 is reduced to the usual gradient projection method.

\noindent
$\bullet$ ({\it Unconstrained case.})
When there is no constraint, i.e., $\mc{I}_1 = \mc{I}_2 = \emptyset$, 
we set $I_A(u)\equiv I_W(u)\equiv \emptyset$ in (\ref{EQ4.15}) and $C^+_A(u)=C^+_W(u)=\{0\}$ in (\ref{EQ4.16}).  
Then it follows $d^+_A(u)=d^+_W(u)=-J'(u)$. 
Our Algorithm~\ref{alg:WSM} automatically becomes the steepest descent method. 

\section{Numerical Examples}\label{SECEX}
\setcounter{equation}{0}
\renewcommand{\theequation}{\thesection.\arabic{equation}}

In this section, we carry out numerical experiments by applying Algorithm~\ref{alg:WSM}
to solve some classic test probelms for nonlinear constrained optimization.
The computation is implemented in {\tt MATLAB 2021a}, in a desktop with CPU 
Intel\textregistered Core\texttrademark i7-6700 and RAM 16GB.
Due to round-off error, we make some tolerance to determine the feasibility and active sets.
In iterations, we require $g_i(\bar{u})<-10^{-7}$ for each $i\in\mc{I}\setminus I_A$ to guarantee the feasibility.
For a point $u\in H$, its active set is identified by $I_A(u) = \{i\in\mc{I}: |g_i(u)|\le 10^{-5}\}$.
Its WIS $I_W$ is the set of index $i\in I_A(u)$ such that $|\langle d,g_i'(u)\rangle| \le 10^{-5}\cdot \|d\|$. 
In each of the following examples, we set $\tau = 1$ and $\varepsilon =10^{-4}$. 
Thus algorithm iterations will be terminated when $\|d\|<\varepsilon =10^{-4}$.
For neat of expression, we report our computational results with four decimals.

First, we apply our algorithm to a relatively simple but classic nonlinear constrained optimization problem.
We selected different starting points to show the efficiency and properties of our algorithm.

To better understand from the computational results how efficiency our algorithm is to identify a FAI, one should
note that an index $i\in I_A(u^{(k)})\setminus I_A(u^{(k+1)})$ is a FAI identified and removed by the algorithm 
while an index $i\in I_A(u^{(k)})\cap I_A(u^{(k+1)})$ is an active index in $I_W(u^{(k)})$ identified and kept by the algorithm.

\begin{example}\label{ex:Rosenbrock}
Consider optimizing the Rosenbrock function
\[
\left\{
\begin{array}{cl}
\underset{u\in\re^2}{\mbox{Minimize}} & J(u)=(1-u_1)^2 + 100(u_2-(u_1)^2)^2\\
\st & g_1(u)=(u_1-1)^3-u_2+1\le 0,\\
& g_2(u)=u_1+u_2-2\le 0.\\
\end{array}
\right.
\]
The optimization has global optimal value and solution:
\[
J^* = 0,\quad u^* = (1, 1).
\]
Clearly, $I_A(u^*) = \{1,2\}$.
We apply our algorithm to this probelm with different starting points 
\[
\mbox{(i).} u^{(1)} = (0,0),\quad \mbox{(ii).} u^{(1)} = (0.5, 1.5),\quad
\mbox{(iii).} u^{(1)} = (0,1).
\]
The active set of each initial point is different.

\noindent
(i). When $u^{(1)} = (0,0)$, our algorithm terminated at loop $k=9$ with $2.23$ seconds.
The computational results are reported in the following table.
\begin{center}
\begin{tabular}{|c|c|c|c|c|c|}
\hline
$k$ & $1$ & $2$ & $3$ & $4-9$\\
\hline
$u^{(k)}$ & $\bbm 0.0000\\0.0000\ebm$ &  $\bbm  0.0008\\0.0023\ebm$ & $\bbm 0.0010\\0.0030\ebm$ &
$\bbm 0.0011\\0.0033\ebm$ \\
\hline
$I_A(u^{(k)})$ & $\{1\}$ & $\{1\}$ & $\{1\}$ & $\{1\}$ \\
\hline
\end{tabular}
\end{center}
The active set for each $u^{(k)}$ remains to be $\{1\}$,
which implies that our method searched along the boundary of the feasible set with $g_1(u)=0$.

\noindent 
(ii). When $u^{(1)} = (0.5, 1.5)$, our algorithm terminated at the loop $k = 6$ with $1.57$ second.
The computational results are reported in the following table.
\begin{center}
\begin{tabular}{|c|c|c|c|c|c|c|}
\hline
$k$ & $1$ & $2$ & $3$ & $4$ & $5$ & $6$\\
\hline
$u^{(k)}$ & $\bbm 0.5000\\1.5000\ebm$ &  $\bbm  0.9893\\1.0107\ebm$ & $\bbm 0.9986\\1.10014\ebm$ &
$\bbm 0.9998\\1.0002\ebm$ &  $\bbm 1.0000\\1.0000\ebm$ & $\bbm 1.0000\\1.0000\ebm$\\
\hline
$I_A(u^{(k)})$ & $\{2\}$ & $\{2\}$ & $\{2\}$ & $\{2\}$ & $\{2\}$ & $\{1,2\}$\\
\hline
\end{tabular}
\end{center}
Our algorithm converged to the true global optimal solution,
following the boundary of feasible set with $g_2(u)=0$.

\noindent
(iii). When $u^{(1)} = (0,1)$, our algorithm terminated at loop $k=20$ with $5.39$ seconds.
The computational results are reported in the following table.
\begin{center}
\begin{tabular}{|c|c|c|c|c|c|c|}
\hline
$k$ & $1$ & $2$ & $3$ & $4$ & $5$ & $6$\\
\hline
$u^{(k)}$ & $\bbm 0.0000\\1.0000\ebm$ &  $\bbm  0.0078\\0.2188\ebm$ & $\bbm 0.0130\\0.1333\ebm$ &
$\bbm 0.0182\\0.0813\ebm$ &  $\bbm 0.0207\\0.0655\ebm$ & $\bbm 0.0210\\0.0639\ebm$\\
\hline
$I_A(u^{(k)})$ & $\emptyset$ & $\emptyset$ & $\emptyset$ &$\emptyset$ & $\emptyset$ & $\emptyset$\\
\hline
$k$ & $7$ & $8$ & $9$ & $10$ & $11$ & $12$\\
\hline
$u^{(k)}$ & $\bbm  0.0212\\0.0631\ebm$ &  $\bbm 0.0213\\0.0627\ebm$ & $\bbm 0.0213\\0.0627\ebm$ &
$\bbm 0.0213\\0.0626\ebm$ &  $\bbm 0.0213\\0.0626\ebm$ & $\bbm 0.0075\\0.0222\ebm$\\
\hline
$I_A(u^{(k)})$ & $\emptyset$ &$\emptyset$ & $\emptyset$ & $\emptyset$ &$\emptyset$ & $\{1\}$\\
\hline
$k$ & $13$ & $14$ & $15$ & $16$ & $17$ & $18-20$ \\
\hline
$u^{(k)}$ & $\bbm  0.0017\\0.0051\ebm$ &  $\bbm 0.0013\\0.0039\ebm$ & $\bbm0.0012\\0.0035\ebm$ &
$\bbm 0.0011\\0.0034\ebm$ &  $\bbm 0.0011\\0.0033\ebm$ &  $\bbm 0.0011\\0.0033\ebm$ \\
\hline
$I_A(u^{(k)})$ & $\{1\}$ & $\{1\}$ &  $\{1\}$ &  $\{1\}$ & $\{1\}$ & $\{1\}$ \\
\hline
\end{tabular}
\end{center}
\begin{figure}[htb]
\epsfig{file=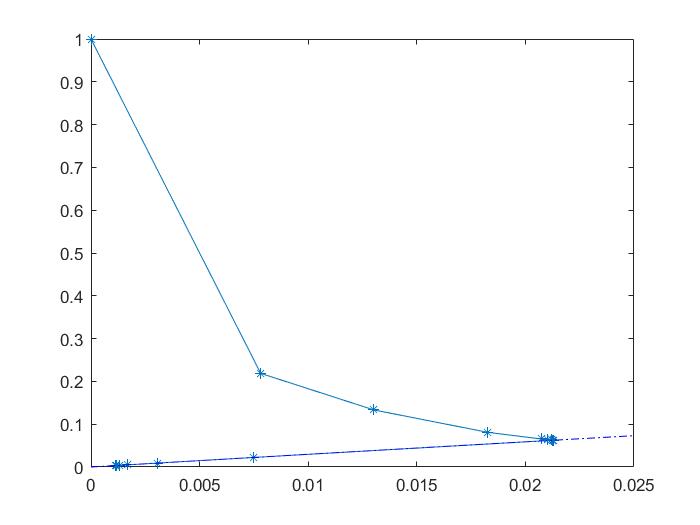,height=2.4in,width=4.6in}
\caption{The computational results of Example~\ref{ex:Rosenbrock} with the initial point $(0,1)$.
The star symbol represents $u^{(k)}$ and the dash-dotted curve represents the boundary given by $g_1(u)=0$.}
\label{fig:Rosen01}
\end{figure}
Since $I_A(u^{(1)})=\emptyset$, our algorithm made first 11 searches inside the feasible set.
It went approaching to the boundary of the feasible set determined by $g_1(u)=0$, 
and converged to the terminated point along the boundary from the loop $k=12$.
We plot these $u^{(k)}$ in Figure~\ref{fig:Rosen01} with the star symbol,
where the dash-dotted line represents the boundary of the feasible set given by $g_1(u)=(u_1-1)^3-u_2+1=0$.
From the figure, we can see the computed points first go approach to the boundary and then converge to 
the terminated point along the boundary.

To check the nature of the point $(0.0011, 0.0033)$ to which our algorithm converges with the initial guess at $(0, 0)$ or $(0,1)$, 
we plot the objective contours and the active constraint curve $g_1(u)=(u_1-1)^3-u_2+1=0$ near the point $(0.0011, 0.0033)$. 
The graph shows that the point $(0.0011, 0.0033)$ is actually a numerical local minimum point.
\begin{figure}[htb]
\epsfig{file=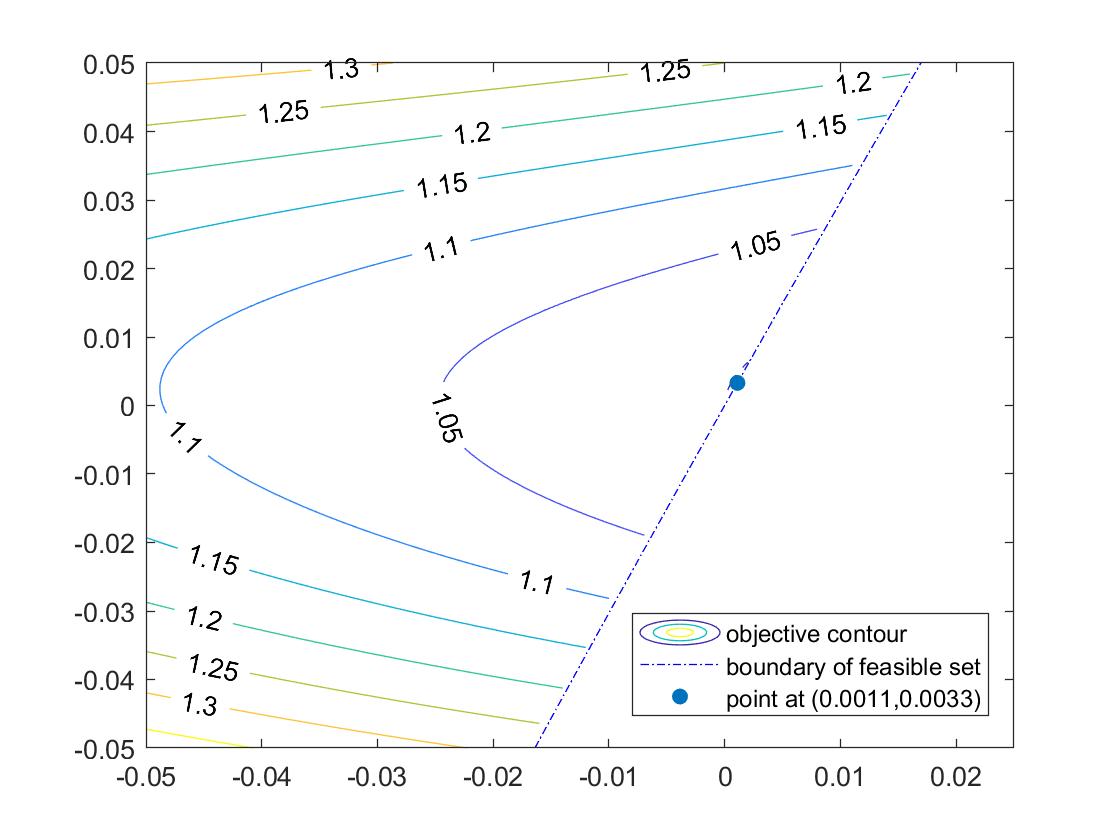,height=2.4in,width=4.6in}
\caption{The objective contours and the active constraint curve $g_1(u)=0$ near the point $(0.0011, 0.0033)$ in Example~\ref{ex:Rosenbrock}.}
\label{fig:Rosen01-ct}
\end{figure}

\end{example}
\pagebreak

\begin{example}\label{ex:Rosenbrock_disk}
Then we consider optimizing the Rosenbrock function with more complicated constraints.
\[
\left\{
\begin{array}{cl}
\underset{u\in\re^2}{\mbox{Minimize}} & J(u)=(1-u_1)^2 + 100(u_2-(u_1)^2)^2\\
\st & g_1(u)=u_1^2 + u_2^2-2\le 0,\\
& g_2(u)=0.16 - (u_1-1)^2 - u_2^2\le 0,\\
& g_3(u)=1 - u_1^2 - (u_2-2)^2\le 0.
\end{array}
\right.
\]
The above optimization problem has global optimal value and solution:
\[
J^* = 0,\quad u^* = (1, 1).
\]
Similar to Example~\ref{ex:Rosenbrock}, we consider different starting points.

\noindent
(i). When $u^{(1)} = (1,-1)$, we have $I_A(u^{(1)}) = \{2\}$. 
Our algorithm terminated at loop $k = 921$ with $160.48$ seconds.
Interestingly, we got
\[
u^{(2)} = \left[\begin{array}{r}0.6094\\-0.8047\end{array}\right],\, I(u^{(2)}) = \emptyset;\quad
u^{(921)} = \bbm 0.9999\\0.9998\ebm,\, I(u^{(921)}) = \emptyset.
\]
It implies that our algorithm identified FAI $\{2\}$ with a single loop and converged to the global optimizer 
from the interior of the feasible set.
To better illustrate the advantage of our algorithm, we plot computed points in each iteration in Figure~\ref{fig:Rosen02}.
In the figure, each $u^{(k)}$ is plotted with the star symbol and the boundary given by $g_1(u)=u_1^2+u_2^2-2=0$ 
is plotted by the dash-dotted curve.
\begin{figure}[htb]
\epsfig{file=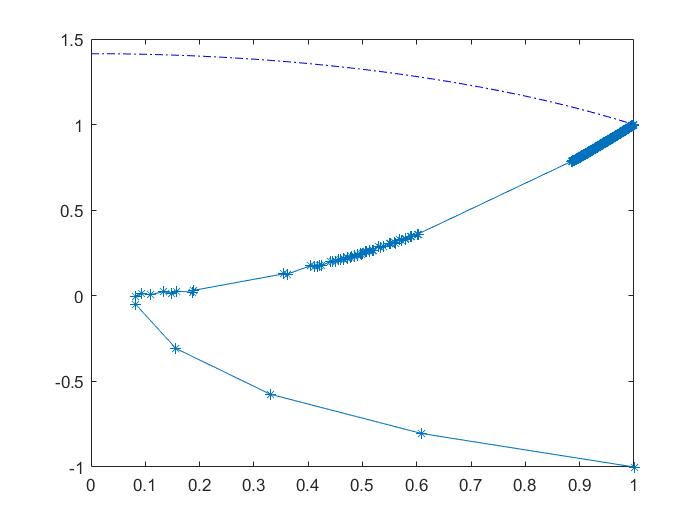,height=2.4in,width=4.6in}
\caption{The computational results of Example~\ref{ex:Rosenbrock_disk} with the initial point $(1,-1)$.
The star symbol represents $u^{(k)}$ and the dash-dotted line represents the boundary given by $g_1(u)=0$.}
\label{fig:Rosen02}
\end{figure}

\noindent
(ii). When $u^{(1)} = \left(\frac{5}{4},\frac{\sqrt{7}}{4}\right)$, we have $I(u^{(1)}) = \{1,3\}$.
Our algorithm terminated at loop $k = 616$ with $106.54$ seconds.
Interestingly, we got
\[
u^{(2)} = \left[\begin{array}{r}1.0298\\0.7494\end{array}\right],\, I(u^{(2)}) = \{1\};\quad
u^{(3)} = \bbm 0.9672\\0.7798\ebm,\, I(u^{(3)}) = \emptyset.
\]
\[
u^{(616)} = \bbm 0.9999\\0.9998\ebm,\quad I(u^{(616)}) = \emptyset.
\]
It shows our algorithm can efficiently identify the FAI and make search in the interior of the feasible set.
Similar to case (i), we plotted the computational results in Figure~\ref{fig:Rosen03}.
\begin{figure}[htb]
\epsfig{file=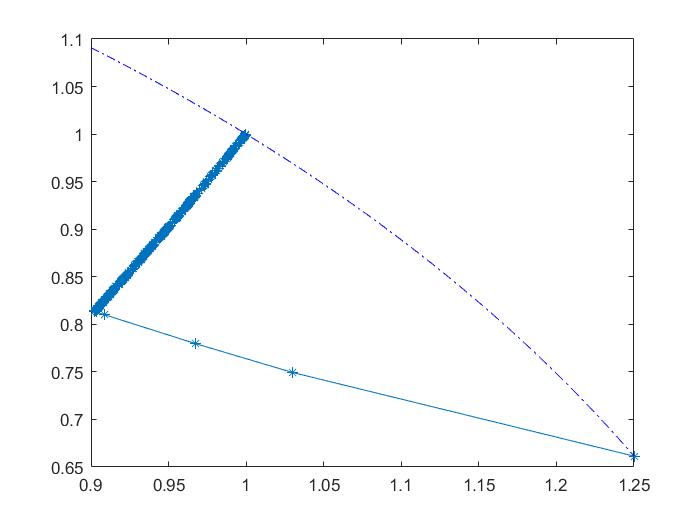,height=2.4in,width=4.6in}
\caption{The computational results of Example~\ref{ex:Rosenbrock_disk} with the initial point 
$\left(\frac{5}{4},\frac{\sqrt{7}}{4}\right)$.
The star symbol represents $u^{(k)}$ and the dash-dotted line represents the boundary given by $g_1(u)=0$.}
\label{fig:Rosen03}
\end{figure}
In the figure, each $u^{(k)}$ is plotted with the star symbol and the boundary given by $g_1(u)=u_1^2+u_2^2-2=0$ 
is plotted by the dash-dotted curve.
\end{example}

\begin{example}[Mishara's Bird]\label{MishraBird}
Consider the optimization problem
\[
\left\{
\begin{array}{cl}
\underset{u\in\re^2}{\mbox{Minimize}} & 
J(u)=\sin(u_2)e^{[(1-\cos u_1)^2]} + \cos(u_1)e^{[(1-\sin u_2)^2]} + (u_1 -u_2)^2\\
\st & g_1(u)=(u_1+5)^2+(u_2+5)^2\le 25,\\
& g_2(u)=u_1+1\le 0,\, g_3(u)=-9-u_1\le 0,\\
& g_4(u)=u_2\le 0,\, g_5(u)=-8-u_2\le 0.
\end{array}
\right.
\]
The optimization has global optimal value and solution:
\[
J^* = -106.7645,\quad u^* = (-3.1302, -1.5821).
\]
In particular, the optimal solution is an interior point.
We apply our algorithm to different initial points.

\noindent
(i). When $u^{(1)} = (-1,-8)$, $I_A(u^{(1)}) = \{1,2,5\}$.
The algorithm terminated at the loop $k = 17$ with $3.90$ seconds.
Interestingly, we got
\[
u^{(2)} = \bbm -2.8431\\-8.0000\ebm,\, I_A(u^{(2)}) = \{5\},\quad
u^{(3)} = \bbm -3.1473\\-7.8206\ebm,\, I_A(u^{(3)}) = \emptyset;
\]
\[
u^{(18)} = \bbm  -3.1757\\-7.8198\ebm, \quad I_A(u^{(18)}) = \emptyset.
\]

\noindent
(ii) When $u^{(1)} = (-5,0) $, $I_A(u^{(1)}) = \{1,4\}$.
The algorithm terminated at the loop $k = 16$ with $2.95$ seconds.
Interestingly, we got
\[
u^{(2)} = \bbm -3.4242\\-2.6410\ebm,\, I_A(u^{(2)}) = \{1,4\},\quad
u^{(3)} = \bbm-2.9783\\-1.6777\ebm,\, I_A(u^{(3)}) = \emptyset;
\]
\[
u^{(18)} = \bbm  -3.1302\\-1.5821\ebm, \quad I_A(u^{(18)}) = \emptyset.
\]
Both cases show that our algorithm can efficiently identify FAIs.

To check the nature of the point $(-3.1757,  -7.8198)$ to which our algorithm converges with the 
initial guess at $(-1, -8)$, since there is no constraint active nearby, we just plot the objective contours near 
the point $(-3.1757, -7.8198)$ in Figure~\ref{fig:Mish-Bird-ct}. The graph shows that the point $(-3.1757, -7.8198)$
is actually a numerical local minimum point.

\begin{figure}[htb]
\epsfig{file=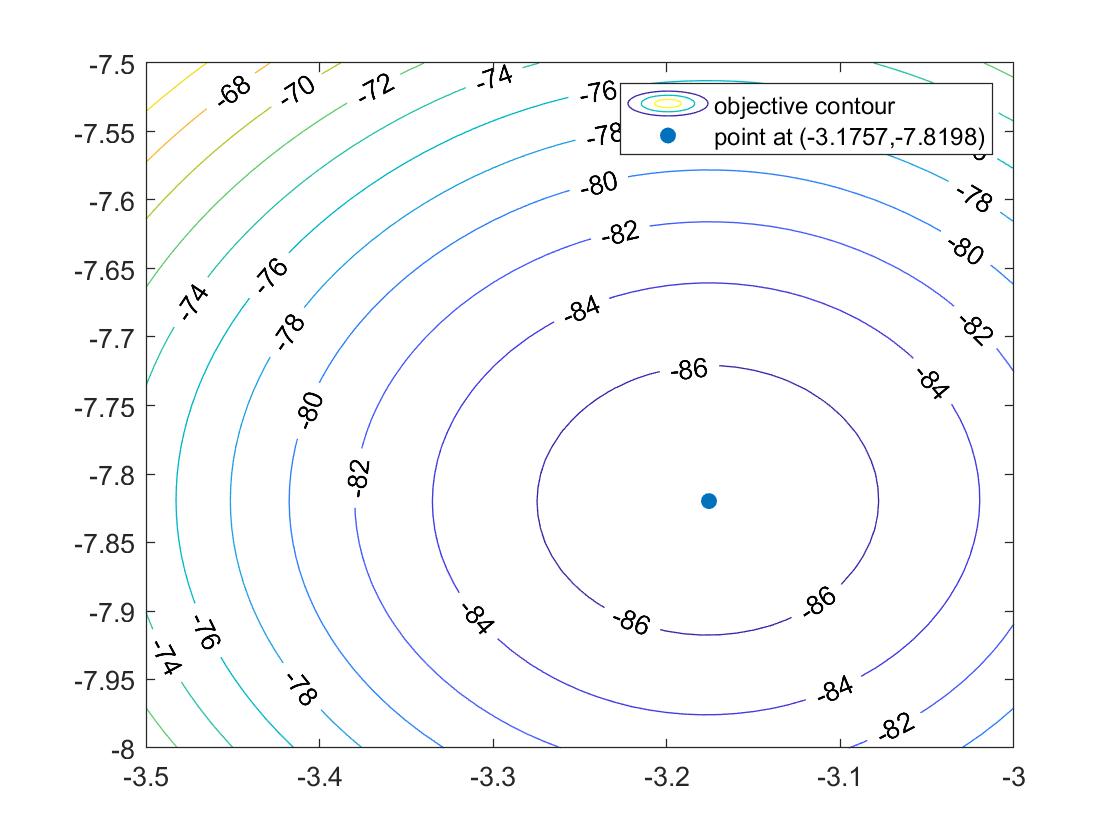,height=2.4in,width=4.6in}
\caption{The objective contours near the interior point $(-3.1757, -7.8198)$ in Example~\ref{MishraBird}.}
\label{fig:Mish-Bird-ct}
\end{figure}
\end{example}

\begin{example}[Gomez and Levy function]\label{ex:Gomes}
Consider the optimization problem
\[\left\{
\begin{array}{cl}
\underset{u\in\re^2}{\mbox{Minimize}} & J(u)=4u_1^2-2.1u_1^4+\frac{1}{3}u_1^6+u_1u_2-4u_2^2+4u_2^4\\
\st & g_1(u)=-\sin(4\pi x)+2\sin^2(2\pi u_2)-1.5\le 0,\\
& g_2(u)=-1-u_1\le 0,\, g_3(u)=u_1-0.75\le 0,\\
& g_4(u)=-1-u_2\le 0,\, g_5(u)=u_2-1\le 0,\\
& g_6(u)=u_2-u_1\le 0,\, g_7(u)=u_1u_2\le 0.
\end{array}
\right.\]
It has global optimal value and solution:
\[
J^* = -1.0316,\quad u^* = (0.0898, -0.7126).
\]
When $u^{(1)} = (-1,-1)$, $I_A(u^{(1)}) = \{2,4,6,7\}$.
The algorithm terminated at the loop $k = 11$ with $ 2.61$ seconds.
The computational results are reported in the following table.
\begin{center}
\begin{tabular}{|c|c|c|c|c|c|}
\hline
$k$ & $1$ & $2$ & $3$ & $4$ & $5$\\
\hline
$u^{(k)}$ & $\bbm -1.0000\\-1.0000\ebm$ &  $\bbm  -0.8188\\-0.8187\ebm$ & $\bbm -0.3820\\-0.4375\ebm$ &
$\bbm -0.0018\\-0.6597\ebm$ &  $\bbm 0.0403\\-0.7023\ebm$\\
\hline
$I_A(u^{(k)})$ & $\{2,4,6,7\}$ & $\{6,7\}$ & $\{7\}$ & $\{7\}$ & $\emptyset$\\
\hline
$k$ & $6$ & $7$ & $8$ & $9$ & $10-11$\\
\hline
$u^{(k)}$ & $\bbm 0.0641\\-0.7096\ebm$ &  $\bbm  0.0890\\-0.7126\ebm$ & $\bbm 0.0898\\-0.7126\ebm$ &
$\bbm 0.0898\\-0.7127\ebm$ &  $\bbm 0.0898\\-0.7127\ebm$\\
\hline
$I_A(u^{(k)})$ & $\emptyset$ &$\emptyset$ &$\emptyset$ & $\emptyset$ & $\emptyset$\\
\hline
\end{tabular}
\end{center}
It shows that our algorithm can efficiently identify FAIs.
\end{example}

\section{Comparision of WSM and ASMs}\label{SEC5}
\setcounter{equation}{0}
\renewcommand{\theequation}{\thesection.\arabic{equation}}

Next we conclude this paper by comparing WSM with ASMs in the literature. 

An efficient and stable numerical algorithm for nonlinear inequality constrained minimization is 
expected to have a monotone descent property. It is usually difficult to achieve with nonlinear inequality constraints involved.
An additional correction process is likely to be constructed to maintain the feasibilities.
Interestingly, if a correction process is superlinear, the descent search direction will dominate the correction terms, 
then the corresponding algorithm can regain the monotone descent property. 
But the correction process depends on the index set used to form its basis  and such an index set should be able to update dynamically. 

Our ASM, Algorithm~\ref{alg:WSM} possesses all those nice properties.
The key is that we use WIS $I_W(u)$ to form a correction basis, which excludes FAIs from $I_A(u)$. 
It is clear that once there is an $i\in I_A(u)\setminus I_W(u)$ used in the correction basis, we have
$\langle d_A(u), g'_i(u)\rangle<0$, where $d_A(u)=d_W(u)$ is the search direction. 
Then the correction process will not be superlinear and the stepsize rule Lemma~\ref{LM2.5} fails to hold.
 
One may ask can we change the search direction $d(u)=d_W(u)$ or the working index set $I_W(u)$
while preserving the monotone descent property and the dynamics of updating $I_A(u)$
in ASMs?

For a feasible point $u\in \Omega$, 
let $I_B(u)$ satisfying $I_W(u)\subset I_B(u)\subseteq I_A(u)$ be an AIS. 
Such $I_B(u)$ may represent some strategy to get rid of some FAIs in an ASM.
Let $C_B(u)$ be the polyhedral cone formed by edges $g_i'(u),\, i\in I_B(u)$, 
and denote 
\[
d_B(u) := -J'(u)-{\mathbb P}_{C_B(u)}(-J'(u)).
\] 
Then similar to (\ref{EQDA=DW}) and by $I_W(u)\subset I_B(u)\subseteq I_A(u)$, we have 
\begin{equation}\label{EQPA=PW}
{\mathbb P}_{C_W(u)}(-J'(u)) \,=\, {\mathbb P}_{C_B(u)}(-J'(u)) \,=\, {\mathbb P}_{C_A(u)}(-J'(u)),
\end{equation}
\begin{equation}\label{EQ2.6}
d_A(u) \quad =\quad d_B(u) \quad =\quad d_W(u).
\end{equation}
In this case, the correction process
 $$u(t)=u+td_B(u)+\sum_{i\in I_B(u)}c_i(t)g'_i(u)$$ 
will satisfy more active constraints including some FAIs.
However, the strict inequality for every $i\in I_A(u)\setminus I_W(u)$, i.e.,
\[
\langle g_i'(u), d_A(u)\rangle<0,\quad \forall i\in I_A(u)\setminus I_W(u)
\]
implies that we cannot get $c_i(t)=o(t)$. Thus the correction term will not be superlinear.
In other words,  enlarging $I_W(u)$ to $I_B(u)$ will lose the superlinear property.
On the other hand, since 
\[
\langle g'_i(u), d_W(u)\rangle=0,\quad  \forall i\in I_W(u),
\] 
the WIS $I_W(u)$ cannot be further reduced to get a feasible iterative $u(t)$,
unless the second derivative terms $g_i''(u), i\in I_W(u)$ are used for further analysis, e.g., 
to be negative definite. In other words, $I_W(u)$ is an {\it optimal} WIS in the sense of active index selection
 to have a superlinear correction, with which $d_W(u)$ is its correctable steepest descent search direction. 
Our WSM identifies FAIs and dynamically update AIS $I_A(u)$ in each algorithm iteration.

Next from the correction process (\ref{EQCRR1}),  we know that $d_W(u)$ is the search direction and $I_W(u)$, 
where FAIs excluded, is the active index set used to form the correction basis $\{ g_i'(u) :i\in I_W(u)\}$. 
Our analysis shows that it is this property $d_W(u)\bot g'_i(u)$ for all $i\in I_W(u)$ that leads to
the superlinear correction, a critical property in establishing the stepsize rule for the algorithm and its convergence analysis [\ref{Zhou0}].
For ASMs in the literature, $d_S(u)$ or $d_A(u)$ is used as a search direction and $I_A(u)$ or $I_A(u)\setminus \{\bar i\}$, 
where ${\bar i}$ is the active index with the moset negative KKT multiplier compunent [\ref{NW}] as mentioned in the introduction, 
is used to form a basis for the correction process. Firstly, since $d_S(u)\bot g'_i(u),\;\forall i\in I_A(u)$,  
the correction is still superlinear but $d_S(u)=0$ cannot guarantee that $u$ is a KKT point. 
Secondly, since $\langle d_A(u), g'_i(u)\rangle<0 ,\;\forall i\in I_A(u)\setminus I_W(u)\subset I_A(u)$, 
the correction cannot be superlinear. It will cause difficulty in establishing a stepsize rule for the algorithm and 
in its convergence analysis as well. Thirdly, to form a correction basis, if one uses the active index set $I_A(u)$,  
it will force the monotone inclusion relation $I_A(u^{(k)})\subset I_A(u^{(k+1)})$, i.e., 
the active index set will be monotonely increased. And if one uses the active index set $I_A(u)\setminus \{\bar i\}$, 
it will put a natural lower bound for the number of iterations to reach an optimal solution. 
In either case, it lacks of dynamics to adaptively update the active index set in algorithm iterations. 

Finally to show CSDD as defined in \reff{EQCSDD} is an {\em optimal correctable descent direction}, 
we compare it to any other correctable direction in the next lemma.
\begin{lemma}\label{LM2.4}
Let $d$ be any correctable direction, i.e., $d\bot g'_i(u),\,\forall  i\in I_W(u)$. 
Write the orthogonal decomposition  $-J'(u)=d(u)+(-J'_{d^\bot}(u))$ with 
$d(u)=-J'_d(u)\in\mbox{\rm span}\{d\}$ and $-J'_{d^\bot}(u)\in d^\bot=\{ v\in H: \langle v, d\rangle=0\}$. 
Then we have
\begin{equation}\label{EQ2.12}
\langle J'(u), d_W(u)\rangle\le\langle J'(u), d(u)\rangle\le 0\quad \mbox{and}\quad \|d(u)\|\le\|d_W(u)\|,
\end{equation}
where the first or third ``$=$'' holds if and only if $d(u)=d_W(u)$ and
the second ``$=$'' holds if and only if $d(u)=0$.
\end{lemma}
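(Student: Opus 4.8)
The plan is to first reduce both the first and the third inequalities in \reff{EQ2.12} to the single norm comparison $\|d(u)\|\le\|d_W(u)\|$ by recording two inner‑product identities. For the search direction, combining \reff{EQDA=DW} with Lemma~\ref{LM2}(4) gives $\langle -J'(u),d_W(u)\rangle=\|d_W(u)\|^2$, i.e. $\langle J'(u),d_W(u)\rangle=-\|d_W(u)\|^2$. For the competitor $d$, observe that $d(u)$ is, by the stated orthogonal decomposition, exactly the projection of $-J'(u)$ onto the closed convex cone $\mbox{span}\{d\}$, so the last line of Lemma~\ref{LM0.1} (with $C=\mbox{span}\{d\}$) yields $\langle -J'(u),d(u)\rangle=\|d(u)\|^2$, hence $\langle J'(u),d(u)\rangle=-\|d(u)\|^2\le 0$ with equality precisely when $d(u)=0$. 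This already proves the second inequality and its equality case, and shows the first inequality is equivalent to $\|d(u)\|\le\|d_W(u)\|$.

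The heart of the argument is the identity $d(u)=\mathbb{P}_{\mbox{span}\{d\}}(d_W(u))$. I would prove it as follows. Put $S_W(u):=\mbox{span}\{g_i'(u):i\in I_W(u)\}$. Since $d$ is correctable, $d\perp g_i'(u)$ for every $i\in I_W(u)$, hence $\mbox{span}\{d\}\subseteq S_W(u)^\perp$. On the other hand $\mathbb{P}_{C_W(u)}(-J'(u))\in C_W(u)\subseteq S_W(u)$, so for every $c\in\mbox{span}\{d\}$ we have $\langle -J'(u)-d_W(u),c\rangle=\langle \mathbb{P}_{C_W(u)}(-J'(u)),c\rangle=0$; that is, $-J'(u)$ and $d_W(u)$ have the same inner product against every element of $\mbox{span}\{d\}$, so their projections onto $\mbox{span}\{d\}$ coincide. (Equivalently, these same two facts identify $d_W(u)=\mathbb{P}_{S_W(u)^\perp}(-J'(u))$, and then Lemma~\ref{LM0.2}, applied with $S=S_W(u)^\perp$ and $C=\mbox{span}\{d\}$, gives the identity at once.)

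With $d(u)=\mathbb{P}_{\mbox{span}\{d\}}(d_W(u))$ in hand, the Pythagorean relation $\|d_W(u)\|^2=\|d(u)\|^2+\|d_W(u)-d(u)\|^2$ delivers $\|d(u)\|\le\|d_W(u)\|$, with equality iff $d_W(u)=d(u)$. Substituting into the two identities from the first paragraph gives $\langle J'(u),d_W(u)\rangle=-\|d_W(u)\|^2\le-\|d(u)\|^2=\langle J'(u),d(u)\rangle$, and the equality analysis transfers verbatim: the first and third ``$=$'' hold iff $d(u)=d_W(u)$, the second ``$=$'' iff $d(u)=0$. The degenerate cases $d=0$ and $u$ a KKT point (where $d_W(u)=0$) are covered by the same formulas. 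The step I expect to be the main obstacle is establishing $d(u)=\mathbb{P}_{\mbox{span}\{d\}}(d_W(u))$ — more precisely, seeing the geometric point that correctability forces $\mbox{span}\{d\}$ into $S_W(u)^\perp$, the very subspace that $d_W(u)$ already lies orthogonal to; once that is noticed, the rest is routine Hilbert‑space bookkeeping built on Lemmas~\ref{LM0.1} and \ref{LM0.2}.
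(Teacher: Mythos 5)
Your proposal is correct, and it takes a genuinely different route from the paper's proof. The paper argues by expanding $\langle J'(u), d_W(u)\rangle$ through the decomposition $-J'(u)=d(u)-J'_{d^\bot}(u)$ and invoking the nested-cone norm inequality $\|{\mathbb P}_{C_W}(-J'(u))\|\le\|{\mathbb P}_{S_W}(-J'(u))\|\le\|-J'_{d^\bot}(u)\|$ (from $C_W\subset S_W\subset d^\bot$), which yields \reff{EQ2.12} directly; the equality cases then require a separate, somewhat delicate analysis splitting $d^\bot=S_W\oplus[S_W]^\bot$ and using uniqueness of projections to conclude $-J'_{d^\bot}(u)={\mathbb P}_{C_W}(-J'(u))$. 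You instead isolate the single identity $d(u)={\mathbb P}_{\mathrm{span}\{d\}}(d_W(u))$, which follows because $-J'(u)-d_W(u)={\mathbb P}_{C_W(u)}(-J'(u))\in S_W(u)$ while $\mathrm{span}\{d\}\subseteq S_W(u)^\bot$ by correctability; from this one Pythagorean relation you read off $\|d(u)\|\le\|d_W(u)\|$, both inequalities in \reff{EQ2.12} (via $\langle J'(u),d_W(u)\rangle=-\|d_W(u)\|^2$ from Lemma~\ref{LM2}(4) and \reff{EQDA=DW}, and $\langle J'(u),d(u)\rangle=-\|d(u)\|^2$), and all three equality characterizations at once, which is cleaner than the paper's treatment. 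Both proofs rest on the same ingredients (Lemma~\ref{LM0.1}, \reff{EQDA=DW}), so neither is more general, but your projection identity buys a shorter and more transparent equality analysis. One small caveat: your parenthetical shortcut via Lemma~\ref{LM0.2} additionally uses $d_W(u)={\mathbb P}_{S_W(u)^\bot}(-J'(u))$, which is not a consequence of the ``same two facts'' alone but of the definition \reff{EQWIS} of $I_W(u)$ together with \reff{EQDA=DW} (these give $d_W(u)\bot g'_i(u)$ for $i\in I_W(u)$); this should be stated explicitly if you use that route, though your main argument does not need it.
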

\begin{proof}
Since $d\bot g'_i(u), i\in I_W(u)$ or $C_W\subset S_W\subset d^\bot$. By Lemma~\ref{LM2}(4) 
and (\ref{EQDA=DW}), we have
\begin{equation}\label{EQ2.16}
\|{\mathbb P}_{C_W}(-J'(u))\|\le\|{\mathbb P}_{S_W}(-J'(u))\|\le\|-J'_{d^\bot}(u)\|.
\end{equation}
Thus one can formulate
\begin{eqnarray}
-\|d_W(u)\|^2&=&\langle J'(u), d_W(u)\rangle\\
&=&
\langle J'(u), -J'(u)-{\mathbb P}_{C_W}(-J'(u))\rangle\nonumber\\
&=&\langle J'(u), d(u)-J'_{d^\bot}(u)-{\mathbb P}_{C_W}(-J'(u))\rangle\nonumber\\
&=&\langle J'(u), d(u)\rangle-\|-J'_{d^\bot}(u)\|^2+\|{\mathbb P}_{C_W}(-J'(u))\|^2\nonumber\\
&\le&\langle J'(u), d(u)\rangle=-\|d(u)\|^2\le 0.\label{EQ2.13},
\end{eqnarray}
i.e., (\ref{EQ2.12}) holds. Denote $[S_W]^\bot=\{ v\in d^\bot : v\bot S_W\}$.
We have a decomposition $d^\bot=S_W\oplus [S_W]^\bot$. If we write
$$-J'_{d^\bot}(u) \,=\, {\mathbb P}_{S_W}(-J'_{d^\bot}(u))+{\mathbb P}_{[S_W]^\bot(u)}(-J'_{d^\bot}(u)),$$
 then
$\,\|-J'_{d^\bot}(u)\|^2=\|{\mathbb P}_{S_W}(-J'_{d^\bot}(u))\|^2+\|{\mathbb P}_{[S_W]^\bot}(-J'_{d^\bot}(u))\|^2$.
When the first or third ``$=$'' in (\ref{EQ2.12}) holds, from (\ref{EQ2.16}) and (\ref{EQ2.13}), we must have 
\begin{equation}\label{EQ2.15}
\|-J'_{d^\bot}(u)\|=\|{\mathbb P}_{C_W}(-J'(u))\|=\|{\mathbb P}_{S_W}(-J'(u))\|
\end{equation}
and 
$\,\|{\mathbb P}_{[S_W]^\bot}(-J'_{d^\bot}(u))\|=0.\,$
It leads to $-J'_{d^\bot}(u)\in S_W$.
Since $C_W\subset S_W\subset d^\bot$, by the uniqueness of the projection and (\ref{EQ2.15}), we get
$$\,-J'_{d^\bot}(u)={\mathbb P}_{S_W}(-J'(u))={\mathbb P}_{C_W}(-J'(u)).\,$$ 
Then
$$d_W(u)=-J'(u)-{\mathbb P}_{C_W}(-J'(u))=-J'(u)-J'_{d^\bot}(u)=d(u).$$
When the second ``$=$'' holds in (\ref{EQ2.12}), by (\ref{EQ2.13}), we have $d(u)=0$.
\end{proof}


\begin{thebibliography}{99}

\bibitem{BER}\label{BER} D.~P.~Bertsekas, 
\emph{Nonlinear Programming}, Second edition,
Athena Scientific, pp. 329–-330, 1999.

\bibitem{BOY}\label{BOY} S. Boyd, and L. Vandenberghe, 
\emph{Convex Optimization}, 
Cambridge University Press, pp. 244, 2004. 

\bibitem{KCC}\label{KCC} K.-C. Chang, 
\emph{Methods in Nonlinear Analysis},
Springer Monogr. Math., 2005.

\bibitem{CHI}\label{CHI} A. C. Chiang,
 \emph{Fundamental Methods of Mathematical Economics}, 
3rd edition, pp. 750–-752, 1984

\bibitem{CTV}\label{CTV} M. Conti, S. Terracini and G. Verzini, 
Infinitely many solutions to fourth order superlinear periodic problems, 
\emph{Trans. Amer. Math. Soc.}, 356(8), 3283--3300, 2003

\bibitem{CHR}\label{CHR} F. E. Curtis and Z. Han and D. P. Robinson, 
A globally convergent primal-dual active-set framework for large-scale convex quadratic optimization,
\emph{Comput. Optim. Applic.}, 60(2), 311–-341, 2015.

\bibitem{EKR}\label{EKR} R.~Eustaquio, E.~Karas and A.~Ribeiro,
\emph{Constraint Qualification for Nonlinear Programming (PDF)(Technical report)}, 
Federal University of Parana.

\bibitem{FGW}\label{FGW} A. Forsgren, P. E. Gill and E. Wong,
Primal and dual active-set methods for convex quadratic programming,
\emph{Math. Program.}, 159, 469--508, 2016

\bibitem{FW}\label{FW} M. Frank and P. Wolfe, 
An algorithm for quadratic programming,
\emph{Naval Research Logistics Quarterly}, 3, 95--110, 1956.

\bibitem{GMS1}\label{GMS1} P. E. Gill, W. Murray and M. A. Saunders,  
SNOPT: an SQP algorithm for large scale constrained optimization, 
\emph{SIAM Review}, 47, 99--131, 2005.

\bibitem{GMS2}\label{GMS2} P. E. Gill, W. Murray and M. A. Saunders,  
User’s guide for SQOPT version 7: software for large scale linear and quadratic programming, 
\emph{Numerical Analysis Report}, 06-1, 2006.

\bibitem{GR1}\label{GR1} N. I. M. Gould and D. P. Robinson, 
A second derivative SQP method: global convergence, 
\emph{SIAM J. Optim.}, 20, 2023--2048, 2010.


\bibitem{GR2}\label{GR2} N. I. M. Gould and D. P. Robinson, 
A second derivative SQP method: local convergence and practical issues,
\emph{SIAM J. Optim.}, 20, 2049–-2079, 2010.


\bibitem{GR3}\label{GR3} N. I. M. Gould and D. P. Robinson, 
A second-derivative SQP method with a `trust region-free' predictor step,
\emph{IMA J. Numer. Anal.}, 32(2), 580–-601, 2011.

\bibitem{HZ}\label{HZ} W. W. Hager and H. Zhang, 
A new active set algorithm for box constrained optimization, 
\emph{SIAM J. Optim.}, 17, 526--557, 2006.

\bibitem{HAN}\label{HAN} M.~A.~Hanson, 
Invexity and the Kuhn-Tucker Theorem,
\emph{J. Math. Anal. Appl.}, 236(2), 594–-604, 1999.

\bibitem{KAR}\label{KAR} W. Karush, 
Minima of Functions of Several Variables with Inequalities as Side Constraints (M.Sc. thesis). 
\emph{Dept. of Mathematics, Univ. of Chicago}, 1939.

\bibitem{KK}\label{KK} M. C. Kemp and Y. Kimura 
\emph{Introduction to Mathematical Economics},
New York: Springer, pp. 38-–44, 1978.

\bibitem{IFT}\label{IFT}
S.~G.~Krantz and H.~R.~Parks, 
\emph{The Implicit Function Theorem: History, Theory, and Applications},
Springer Science \& Business Media, 2002.

\bibitem{KT}\label{KT} H. W. Kuhn and A. W. Tucker,
Nonlinear programming,
\emph{Proceedings of 2nd Berkeley Symposium}, 
Berkeley, University of California Press. pp. 481–-492, 1951.

\bibitem{MAR}\label{MAR} D.~H.~Martin,
The Essence of Invexity, 
\emph{J. Optim. Theory Appl.}, 47(1), 65–-76, 1985.

\bibitem{NW}\label{NW} J. Nocedal and S. J. Wright, 
\emph{Numerical Optimization (2nd ed.)}, 
Berlin, New York: Springer-Verlag, 2006.


\bibitem{ROS1}\label{ROS1} J. B. Rosen, 
The gradient projection method for nonlinear programming. I. Linear constraints, 
\emph{J. Soc. Indust. Appl. Math.}, 8, 181-217, 1960. 


\bibitem{ROS2}\label{ROS2} J. B. Rosen, 
The gradient projection method for nonlinear programming. II. Nonlinear constraints, 
\emph{J. Soc. Indust. Appl. Math.}, 9, 514-532, 1961.

\bibitem{RUS}\label{RUS} A. Ruszczy\'{n}ski,  
\emph{Nonlinear Optimization}, 
Princeton University Press, 2006.

\bibitem{SCH}\label{SCH} K. Schittkowski, 
An active set strategy for solving optimization problems with up to 200,000,000 nonlinear constraints, 
\emph{Appl. Numer. Math.}, 59(12), 2999–3007, 2009.

\bibitem{Zhou0}\label{Zhou0} J. Zhou, 
A New Working Set Method for Nonlinear Inequality Constrained Minimization--Global Convergence, 
\emph{Preprint}.

\end{thebibliography}
\end{document}